\documentclass[11pt,letterpaper]{amsart}
\usepackage[letterpaper,margin=1.50in]{geometry}

\usepackage{graphicx}
\usepackage[colorlinks, citecolor=blue, linkcolor=black]{hyperref}
 \usepackage{fourier}

\newtheorem{theorem}{Theorem}[section]

\newtheorem{lemma}[theorem]{Lemma}
\newtheorem{proposition}[theorem]{Proposition}

\newtheorem{remark}[theorem]{Remark}

\theoremstyle{definition}
\newtheorem{definition}[theorem]{Definition}
\newtheorem{example}[theorem]{Example}

\numberwithin{equation}{section}

\begin{document}
\date{}
\title[{Minimal surface system in Euclidean four-space}]
{\large{Minimal surface system in Euclidean four-space}}

\author{Hojoo Lee}
\email{momentmaplee@gmail.com}
\thanks{2010 {\em Mathematics Subject Classification.} 53A10, 49Q05.}

\keywords{Generalized Gauss map, minimal surface system, special Lagrangian equation.}

\maketitle

  \begin{center}
 {An homage to Robert Osserman's book  \emph{\textbf{A Survey of Minimal Surfaces}}}
 \end{center}
 
  \bigskip

\section{Main results}

Osserman \cite{Osserman 1968, Osserman 1969, Osserman 1973} initiated the study of the minimal surface system in arbitrary codimensions. 
In 1977, Lawson and Osserman \cite{Lawson Osserman 1977} found 
fascinating counterexamples to the existence, uniqueness and regularity of solutions to the minimal surface system. 
As a remarkable nonlinear extension of Riemann's removable singularity theorem, Bers' Theorem \cite{Abikoff Corillon Gilman Kra Weinstein 1995, 
Bers 1951, Finn 1953, Osserman 1973} reveals that an isolated singularity of a single valued solution to the minimal surface equation in ${\mathbb{R}}^{3}$
is removable. However, as in Example \ref{punctured example}, the minimal surface system in ${\mathbb{R}}^{4}$ can have solutions with isolated 
non-removable singularities.

Extending Bernstein's Theorem that the only entire minimal graphs in ${\mathbb{R}}^{3}$ are planes, Osserman established that any entire two 
dimensional minimal graph in ${\mathbb{R}}^{4}$ should be \emph{degenerate}, in the sense that its generalized Gauss map (Definition \ref{def 
of generalized gauss map})  lies on a hyperplane of  the complex projective space ${\mathbb{CP}}^{3}$. Landsberg \cite{Landsberg 1992} investigated 
the systems of the first order whose solutions induce minimal varieties. The classical \textbf{Cauchy--Riemann equations} 
$\left(  f_{x},  f_{y} \right) = \left(   g_{y}, -  g_{x}  \right)$ solves the \textbf{minimal surface system} of the second order 
\[
\begin{cases}
 0 =   \left( 1+ {f_{y}}^{2}+{g_{y}}^{2} \right) f_{xx} - 2 \left(  f_{x} f_{y}  +  g_{x} g_{y} \right) f_{xy} +    \left( 1+ {f_{x}}^{2}+{g_{x}}^{2} \right) f_{yy},  \\
 0 =   \left( 1+ {f_{y}}^{2}+{g_{y}}^{2} \right) g_{xx} - 2 \left(  f_{x} f_{y}  +  g_{x} g_{y} \right) g_{xy} +    \left( 1+ {f_{x}}^{2}+{g_{x}}^{2} \right) g_{yy},
\end{cases}
\]
which geometrically means that the two dimensional graph of height functions $f(x,y)$ and $g(x,y)$ is a minimal surface in ${\mathbb{R}}^{4}$. 
A holomorphic curve, or equivalently, special Lagrangian $2$-fold in ${\mathbb{C}}^{2}$ can be identified as a minimal surface in ${\mathbb{R}}^{4}$
whose generalized Gauss map lies on the intersection of two hyperplanes $z_{1}+i z_{2}=0$ and  $z_{3}+i z_{4}=0$ in  ${\mathbb{CP}}^{3}$.
 Generalizing the Cauchy--Riemann equations, we construct the Osserman system of the first order, whose solution graphs become 
 degenerate minimal surfaces in ${\mathbb{R}}^{4}$. 

\begin{theorem}[\textbf{Osserman system as a generalization of Cauchy--Riemann equations}] \label{main one}
Let ${\Sigma}$ be the graph in ${\mathbb{R}}^{4}$ of the pair $\left( f(x, y), g(x,y)  \right)$ of height functions 
defined on $\Omega$
\[
 \Sigma =\left\{\, \mathbf{\Phi}(x,y) = \left(x, y, f(x, y), g(x,y) \right) \in {\mathbb{R}}^{4} \, \vert \, (x,y) \in \Omega \, \right\}.
\]
The induced metric and area element on the surface $\Sigma$ reads
\[
{\mathbf{g}}_{{\Sigma}} = E dx^{2} + 2F dx dy  + G dy^{2}, \quad  dA_{\Sigma} = \omega \, dx \, dy, \quad \omega= \sqrt{EG -F^2},
\]
where the coefficients of the first fundamental form are determined by 
\[
E = {\mathbf{\Phi}}_{x} \cdot {\mathbf{\Phi}}_{x} = 1+ {f_{x}}^{2}+{g_{x}}^{2}, \;
F = {\mathbf{\Phi}}_{x} \cdot {\mathbf{\Phi}}_{y} =   f_{x} f_{y}  +  g_{x} g_{y}, \;
G = {\mathbf{\Phi}}_{y} \cdot {\mathbf{\Phi}}_{y} = 1+ {f_{y}}^{2}+{g_{y}}^{2}.
\]
Assume that  $\left( f(x,y), g(x,y) \right)$ obeys our Osserman system with the coefficient  $\mu \in \mathbb{R}-\{0\}:$
\[
  \begin{bmatrix}   f_x \\ f_y \end{bmatrix}  =  \mu   \begin{bmatrix}  \frac{E}{\omega} & \frac{F}{\omega} 
  \\ \frac{F}{\omega} & \frac{G}{\omega} \end{bmatrix}    \begin{bmatrix}   g_y \\ -  g_x \end{bmatrix},   
 \quad \text{or equivalently,}    \quad
  \begin{bmatrix}   g_x \\ g_y \end{bmatrix}  = - \frac{1}{\mu}   \begin{bmatrix}  \frac{E}{\omega} & \frac{F}{\omega} 
  \\ \frac{F}{\omega} & \frac{G}{\omega} \end{bmatrix}    \begin{bmatrix}   f_y \\ -  f_x \end{bmatrix}.
\]
 Then, the graph $\Sigma$ is a minimal surface  in ${\mathbb{R}}^{4}$. Moreover, its generalized Gauss map (Definition \ref{def of generalized gauss map}) lies on
the hyperplane $z_{3} + i \mu z_{4} =0$ of the complex projective space ${\mathbb{CP}}^{3}$. 
 \end{theorem}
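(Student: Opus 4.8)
The plan is to derive the second–order minimality of $\Sigma$ from the first–order Osserman system, and then to locate the Gauss map by a direct algebraic substitution. Throughout I write $\mathcal{G}=\begin{bmatrix}E&F\\F&G\end{bmatrix}$ for the matrix of the first fundamental form, $\mathcal{G}^{*}=\begin{bmatrix}G&-F\\-F&E\end{bmatrix}$ for its cofactor matrix (so that $\mathcal{G}^{*}=\omega^{2}\mathcal{G}^{-1}$ and $\det\mathcal{G}=\omega^{2}$), and $J=\begin{bmatrix}0&1\\-1&0\end{bmatrix}$, so that $J\nabla g=(g_{y},-g_{x})$. The matrix occurring in the Osserman system is exactly $\tfrac1\omega\mathcal{G}$, and its two presentations read $\nabla f=\tfrac\mu\omega\,\mathcal{G}\,J\nabla g$ and $\nabla g=-\tfrac1{\mu\omega}\,\mathcal{G}\,J\nabla f$. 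For minimality I would first reduce to the harmonicity of the two height functions: the first variation of the area $\int_{\Omega}\omega\,dx\,dy$ with respect to $f$ and $g$ produces the Euler--Lagrange operators $\mathrm{div}\!\left(\tfrac1\omega\mathcal{G}^{*}\nabla f\right)$ and $\mathrm{div}\!\left(\tfrac1\omega\mathcal{G}^{*}\nabla g\right)$, which are $\omega$ times the Laplace--Beltrami operators of $\mathbf{g}_{\Sigma}$ applied to $f$ and $g$. A graph is minimal exactly when both vanish: since the mean curvature vector $\vec{H}=\Delta_{\mathbf{g}_{\Sigma}}\mathbf{\Phi}$ is normal to $\Sigma$, the relations $\vec{H}\cdot\mathbf{\Phi}_{x}=\vec{H}\cdot\mathbf{\Phi}_{y}=0$ express $\Delta_{\mathbf{g}_{\Sigma}}x$ and $\Delta_{\mathbf{g}_{\Sigma}}y$ as linear combinations of $\Delta_{\mathbf{g}_{\Sigma}}f$ and $\Delta_{\mathbf{g}_{\Sigma}}g$, so that harmonicity of the two height functions already forces $\vec{H}=0$.

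The crux is then a one–line computation. Applying $\omega\,\mathcal{G}^{-1}=\tfrac1\omega\mathcal{G}^{*}$ to both sides of $\nabla f=\tfrac\mu\omega\mathcal{G}\,J\nabla g$ gives $\tfrac1\omega\mathcal{G}^{*}\nabla f=\mu\,J\nabla g=\mu\,(g_{y},-g_{x})$; the right–hand side is a rotated gradient and is therefore divergence free, whence $\mathrm{div}\!\left(\tfrac1\omega\mathcal{G}^{*}\nabla f\right)=\mu\,(g_{yx}-g_{xy})=0$. The equivalent presentation yields symmetrically $\tfrac1\omega\mathcal{G}^{*}\nabla g=-\tfrac1\mu\,(f_{y},-f_{x})$, with vanishing divergence $-\tfrac1\mu(f_{yx}-f_{xy})=0$. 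Hence $f$ and $g$ are harmonic for $\mathbf{g}_{\Sigma}$ and $\Sigma$ is minimal. The elegant point, which I would emphasise, is that $\tfrac1\omega\mathcal{G}^{*}$ is precisely the coefficient matrix converting the minimal surface operator into a divergence, and the Osserman system is engineered so that the resulting fluxes are curl–free rotations of the conjugate gradient.

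For the generalized Gauss map I would represent the oriented tangent plane by an isotropic complex tangent vector $\mathbf{\Psi}=\alpha\,\mathbf{\Phi}_{x}+\beta\,\mathbf{\Phi}_{y}$, where $[\alpha:\beta]\in\mathbb{CP}^{1}$ solves the null condition $E\alpha^{2}+2F\alpha\beta+G\beta^{2}=0$; one root is $[\alpha:\beta]=[-(F+i\omega):E]$, for which $\mathbf{\Psi}\cdot\mathbf{\Psi}=0$ places $[\mathbf{\Psi}]$ on the Plücker quadric in $\mathbb{CP}^{3}$. Reading off $\mathbf{\Psi}=(\alpha,\,\beta,\,\alpha f_{x}+\beta f_{y},\,\alpha g_{x}+\beta g_{y})$ identifies the homogeneous coordinates $z_{1},\dots,z_{4}$, so the claim becomes $z_{3}+i\mu z_{4}=-(F+i\omega)(f_{x}+i\mu g_{x})+E(f_{y}+i\mu g_{y})=0$. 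Substituting $f_{x}=\tfrac\mu\omega(Eg_{y}-Fg_{x})$ and $f_{y}=\tfrac\mu\omega(Fg_{y}-Gg_{x})$ cancels the $g_{y}$–terms immediately, and the identity $(F+i\omega)(F-i\omega)=F^{2}+\omega^{2}=EG$ cancels the $g_{x}$–terms, giving $z_{3}+i\mu z_{4}=0$.

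I expect the only genuine obstacle to be a matter of convention rather than analysis: one must fix the correct branch $[-(F+i\omega):E]$ of the null line and the orientation so that $[\mathbf{\Psi}]$ is exactly the image prescribed by Definition \ref{def of generalized gauss map}, since the conjugate branch $[-(F-i\omega):E]$ would instead land on $z_{3}-i\mu z_{4}=0$. Once that branch is pinned down, the hyperplane identity reduces to the single quadratic relation $F^{2}+\omega^{2}=EG$, and the minimality follows from the divergence structure above with no further work.
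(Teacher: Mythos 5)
Your proof is correct and follows essentially the same route as the paper: the minimality argument (rewrite $\tfrac{1}{\omega}\mathcal{G}^{*}\nabla f=\mu\,(g_{y},-g_{x})$ and $\tfrac{1}{\omega}\mathcal{G}^{*}\nabla g=-\tfrac{1}{\mu}\,(f_{y},-f_{x})$, note that these rotated gradients are divergence free, and conclude $\mathbf{H}=0$ from harmonicity of the heights together with the normality of the mean curvature vector) is precisely the paper's proof of Theorem \ref{why osserman} via Theorem \ref{MSS}. For the Gauss map you re-derive the homogeneous coordinates from the null condition $E\alpha^{2}+2F\alpha\beta+G\beta^{2}=0$ rather than quoting Lemma \ref{Gauss map for minimal graphs}, but the branch $\left[-(F+i\omega):E\right]$ you choose agrees, up to an overall nonzero complex factor, with the paper's $\left[\,1-i\tfrac{F}{\omega}\,:\,i\tfrac{E}{\omega}\,\right]$, so the convention issue you flag is already resolved in your favor and your substitution reproduces the paper's identity $z_{3}=-i\mu z_{4}$.
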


 The \emph{Lagrange potential} (Lemma \ref{Lagrange potential} and Remark \ref{Geometric meaning of Lagrange potential}) on minimal graphs in ${\mathbb{R}}^{3}$ plays a critical role in the Jenkins-Serrin construction \cite[Section 3]{Jenkins Serrin 1966} of minimal graphs with infinite boundary values. As stated in \cite[Section 2]{Bers 1951}, when the height of a minimal graph in ${\mathbb{R}}^{3}$ is interpreted as the potential of a flow of the Chaplygin gas, our Lagrange potential becomes the 
 \emph{stream function}. We use the Lagrange potential to build two dimensional minimal graphs in ${\mathbb{R}}^{4}$ and three dimensional minimal graphs in ${\mathbb{R}}^{6}$. 

\begin{theorem}[\textbf{Two applications of Lagrange potential of height on minimal surfaces}] \label{main two}
Let ${\Sigma}_{0}$ be the minimal graph of the function $\mathbf{p} : \Omega \to \mathbb{R}$ defined on a domain  $\Omega \subset {\mathbb{R}}^{2}:$
\[
{\Sigma}_{0} =  \left\{   \left(x, y, \mathbf{p}(x, y) \right) \in {\mathbb{R}}^{3} \, \vert \, (x,y) \in \Omega \right\}, 
\]
where the height function $p(x,y)$ solves the minimal surface equation
\[
0 =   \frac{\partial}{\partial x} \left(  \frac{ {\mathbf{p}}_x }{\sqrt{1+{{\mathbf{p}}_{x}}^{2}+{{\mathbf{p}}_{y}}^{2} }}  \right)+  
 \frac{\partial}{\partial y} \left(  \frac{ {\mathbf{p}}_y }{\sqrt{1+{{\mathbf{p}}_{x}}^{2}+{{\mathbf{p}}_{y}}^{2} }}  \right), \quad \left(x, y \right) \in \Omega.
\]
Let ${\mathbf{q}} : \Omega \to \mathbb{R}$ denote the Lagrange potential of ${\mathbf{p}} : \Omega \to \mathbb{R}$, which solves the system
\begin{equation}  \label{Lag conjugate 2}
 \left({\mathbf{q}}_{x}, {\mathbf{q}}_{y} \right)= \left( - \frac{ {\mathbf{p}}_y }{\sqrt{1+{{\mathbf{p}}_{x}}^{2}+{{\mathbf{p}}_{y}}^{2} }}, \frac{ {\mathbf{p}}_x }{\sqrt{1+{{\mathbf{p}}_{x}}^{2}+{{\mathbf{p}}_{y}}^{2} }}    \right). 
\end{equation}  
\begin{enumerate}
\item[\textbf{(a)}] For any constant $\lambda \in \mathbb{R}-\{0\}$, we associate the two dimensional graph  in ${\mathbb{R}}^{4}$
 \begin{equation} 
 {\Sigma}_{\lambda} =\left\{\,  \left(x, y,  \left( \cosh \lambda \right) \, {\mathbf{p}}(x, y),  \left( \sinh \lambda \right) \,  {\mathbf{q}}(x, y) \right) \in {\mathbb{R}}^{4} \, \vert \, (x,y) \in \Omega \, \right\}.
\end{equation}  
Then, the pair $\left(  f(x,y), g(x,y)  \right) = \left(  \left( \cosh \lambda \right) {\mathbf{p}}(x, y),   \left( \sinh \lambda \right) {\mathbf{q}}(x, y) \right)$ obeys
the Osserman system with the coefficient $\mu = \coth \lambda$. In particular, the graph ${\Sigma}_{\lambda}$ becomes  
a minimal surface in ${\mathbb{R}}^{4}$. Furthermore, we obtain the invariance of the conformally changed induced metric 
$\frac{1}{\sqrt{\det{ \left( {\mathbf{g}}_{{\Sigma}_{\lambda}}  \right) }}}  {\mathbf{g}}_{{\Sigma}_{\lambda}} = \frac{1}{\sqrt{\det{ \left( {\mathbf{g}}_{{\Sigma}_{0}}  \right) }}}  {\mathbf{g}}_{{\Sigma}_{0}}$. 
\item[\textbf{(b)}] For any constant $\lambda \in \mathbb{R}-\{0\}$, we associate the three dimensional graph  in ${\mathbb{R}}^{6}$
\[
 \left\{\, \left(x,\, y, \, z,\, {\mathbf{p}}_{x} - \lambda z \frac{ {\mathbf{p}}_{y} }{\sqrt{1+{{\mathbf{p}}_{x}}^{2}+{{\mathbf{p}}_{y}}^{2}}}, \,
 {\mathbf{p}}_{y} +\lambda z  \frac{ {\mathbf{p}}_{x} }{\sqrt{1+{{\mathbf{p}}_{x}}^{2}+{{\mathbf{p}}_{y}}^{2}}}, \, \lambda {\mathbf{q}}\, \right) \in {\mathbb{R}}^{6} \; \vert \; (x,y) \in \Omega, \, z \in \mathbb{R} \right\}.
\]
Then, it is special Lagrangian in ${\mathbb{C}}^{3}$, so homologically volume minimizing in ${\mathbb{R}}^{6}$.
\end{enumerate}
\end{theorem}

 We find the one parameter family of two dimensional minimal graphs in ${\mathbb{R}}^{4}$ defined over punctured plane, which realizes (part of) the Lagrangian catenoid in ${\mathbb{C}}^{2}$ and Fraser-Schoen band in ${\mathbb{R}}^{4}$ (Example \ref{punctured example}). We have the two parameter family of minimal graphs in ${\mathbb{R}}^{4}$ connecting catenoids, helicoids, planes in ${\mathbb{R}}^{3}$, and complex logarithmic graph in ${\mathbb{C}}^{2}$ (Example \ref{catenoid helicoid complex logarithmic graph}). 
 Applying the item \textbf{(a)} to helicoid and catenoid in ${\mathbb{R}}^{3}$, we obtain minimal surfaces in ${\mathbb{R}}^{4}$ foliated by hyperbolas or lines (Example \ref{Lee hyperbola}) and  the Osserman-Hoffman ellipse-foliated minimal annuli in ${\mathbb{R}}^{4}$ with total curvature $-4\pi$ (Example \ref{HO ellipse}) respectively. Applying  the item \textbf{(b)} to Scherk's graph ${\mathbb{R}}^{3}$, we build the doubly periodic special Lagrangian graphs in ${\mathbb{C}}^{3}$ (Example \ref{two periodic SL3}).

\section{Minimal surface system in ${\mathbb{R}}^{4}$ and Cauchy--Riemann equations}

 Our ambient space is Euclidean space ${\mathbb{R}}^{4}$ equipped with the metric ${dx_{1}}^{2} +\cdots + {dx_{4}}^{2}$.

\begin{theorem}[\textbf{Two dimensional minimal graphs in ${\mathbb{R}}^{4}$}]  \label{MSS}
Let ${\Sigma}$ denote the graph in ${\mathbb{R}}^{4}$ 
\[
{\Sigma}^{2}=\left\{\, \mathbf{\Phi}(x,y) = \left(x, y, f(x, y), g(x,y) \right) \in {\mathbb{R}}^{4} \, \vert \, (x,y) \in \Omega \, \right\}.
\]
 The induced metric ${\mathbf{g}}_{{\Sigma}}$ on the surface $\Sigma$ reads
\begin{equation*} \label{FirstFF 1}
{\mathbf{g}}_{{\Sigma}} = E dx^{2} + 2F dx dy  + G dy^{2}, 
\end{equation*}
where the coefficients of the first fundamental form are determined by 
\begin{equation} \label{FirstFF 2}
E = {\mathbf{\Phi}}_{x} \cdot {\mathbf{\Phi}}_{x} = 1+ {f_{x}}^{2}+{g_{x}}^{2}, \;
F = {\mathbf{\Phi}}_{x} \cdot {\mathbf{\Phi}}_{y} =   f_{x} f_{y}  +  g_{x} g_{y}, \;
G = {\mathbf{\Phi}}_{y} \cdot {\mathbf{\Phi}}_{y} = 1+ {f_{y}}^{2}+{g_{y}}^{2}.
\end{equation}
The area element is $ dA_{\Sigma} = \omega \, dx \, dy$ wtih $\omega= \sqrt{EG -F^2}$. 
We introduce the minimal surface operator ${\mathcal{L}}_{\Sigma}$ and Laplace-Beltrami operator ${\triangle}_{\Sigma}$ acting on functions on $\Omega:$
\begin{equation} \label{MSO}
{\mathcal{L}}_{{}_{\Sigma}} := G  \frac{{\partial}^{2} }{ \partial x^{2}} - 2F  \frac{{\partial}^{2}}{ \partial x \partial y} + E \frac{{\partial}^{2}}{ \partial y^{2}},
\end{equation}
\begin{equation} \label{LBO}
{\triangle}_{{}_{\Sigma}} :={\triangle}_{{\mathbf{g}}_{{\Sigma}} }  = \frac{1}{\omega} \left[ \; \frac{\partial}{\partial x}  \left( \frac{G}{\omega} \frac{\partial}{\partial x} \; - 
 \frac{F}{\omega} \frac{\partial}{\partial y} \; \right) + \frac{\partial}{\partial y}  \left( - \frac{F}{\omega} \frac{\partial}{\partial x} \; +
 \frac{E}{\omega} \frac{\partial}{\partial y}  \; \right)  \; \right]. 
\end{equation}
Then, the following three conditions are equivalent. 
\begin{enumerate}
\item[\textbf{(a)}] Two height functions $f(x,y)$ and $g(x,y)$ are harmonic on the graph $\Sigma$$:$
\[
{\triangle}_{{}_{\Sigma}} f = 0 \quad \text{and}  \quad {\triangle}_{{}_{\Sigma}} g = 0.
\]
\item[\textbf{(b)}] The graph $\Sigma$ is minimal in ${\mathbb{R}}^{4}$.
\item[\textbf{(c)}] Two height functions $f(x,y)$ and $g(x,y)$ solve the minimal surface system$:$ 
\begin{equation} \label{MSS quote}
{\mathcal{L}}_{{}_{\Sigma}}   f = 0 \quad \text{and}  \quad {\mathcal{L}}_{{}_{\Sigma}} g = 0.
\end{equation}
\end{enumerate}
\end{theorem}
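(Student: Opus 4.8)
The plan is to route both equivalences through the classical Beltrami identity for the isometric immersion $\mathbf{\Phi}$, namely that $\triangle_{\Sigma}\mathbf{\Phi}$ equals the mean curvature vector $\mathbf{H}$ (the trace of the second fundamental form), which is normal to $\Sigma$ and vanishes exactly when $\Sigma$ is minimal. Applying $\triangle_{\Sigma}$ to $\mathbf{\Phi}=(x,y,f,g)$ componentwise gives $\triangle_{\Sigma}\mathbf{\Phi}=\left(\triangle_{\Sigma}x,\triangle_{\Sigma}y,\triangle_{\Sigma}f,\triangle_{\Sigma}g\right)$. Since $\mathbf{H}$ is orthogonal to the tangent vectors $\mathbf{\Phi}_{x}=(1,0,f_{x},g_{x})$ and $\mathbf{\Phi}_{y}=(0,1,f_{y},g_{y})$, the conditions $\mathbf{H}\cdot\mathbf{\Phi}_{x}=\mathbf{H}\cdot\mathbf{\Phi}_{y}=0$ would immediately furnish the two relations
\[
\triangle_{\Sigma}x=-f_{x}\,\triangle_{\Sigma}f-g_{x}\,\triangle_{\Sigma}g,\qquad
\triangle_{\Sigma}y=-f_{y}\,\triangle_{\Sigma}f-g_{y}\,\triangle_{\Sigma}g,
\]
expressing the two base components of $\triangle_{\Sigma}\mathbf{\Phi}$ as linear combinations of the two height components.

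With these in hand, the equivalence of (a) and (b) is immediate: $\Sigma$ is minimal iff $\mathbf{H}=0$, i.e. iff all four functions $\triangle_{\Sigma}x,\triangle_{\Sigma}y,\triangle_{\Sigma}f,\triangle_{\Sigma}g$ vanish, and the displayed relations show that the vanishing of $\triangle_{\Sigma}f$ and $\triangle_{\Sigma}g$ alone already forces $\triangle_{\Sigma}x=\triangle_{\Sigma}y=0$. Hence $\mathbf{H}=0\iff\triangle_{\Sigma}f=\triangle_{\Sigma}g=0$, which is precisely (b)$\iff$(a).

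For (a)$\iff$(c) I would first expand the divergence-form operator $\triangle_{\Sigma}$ by the Leibniz rule, separating the pure second-order part from the terms in which the derivatives land on the coefficients $G/\omega$, $F/\omega$, $E/\omega$. A short computation of $\triangle_{\Sigma}x$ and $\triangle_{\Sigma}y$ directly from definition (\ref{LBO}) shows that these coefficient-derivative terms reassemble into exactly $\omega\,\triangle_{\Sigma}x$ and $\omega\,\triangle_{\Sigma}y$, yielding for every function $u$ the identity
\[
\triangle_{\Sigma}u=\frac{1}{\omega^{2}}\,\mathcal{L}_{\Sigma}u+u_{x}\,\triangle_{\Sigma}x+u_{y}\,\triangle_{\Sigma}y.
\]
Applying this to $u=f$ and $u=g$ and substituting the two relations above for $\triangle_{\Sigma}x,\triangle_{\Sigma}y$ converts the correction terms into a linear feedback in $(\triangle_{\Sigma}f,\triangle_{\Sigma}g)$, producing the $2\times2$ system
\[
\begin{bmatrix} 1+f_{x}^{2}+f_{y}^{2} & f_{x}g_{x}+f_{y}g_{y} \\ f_{x}g_{x}+f_{y}g_{y} & 1+g_{x}^{2}+g_{y}^{2}\end{bmatrix}
\begin{bmatrix}\triangle_{\Sigma}f \\ \triangle_{\Sigma}g\end{bmatrix}
=\frac{1}{\omega^{2}}\begin{bmatrix}\mathcal{L}_{\Sigma}f \\ \mathcal{L}_{\Sigma}g\end{bmatrix}.
\]

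The crux is then the purely algebraic fact that the coefficient matrix is invertible. Its determinant is $(1+f_{x}^{2}+f_{y}^{2})(1+g_{x}^{2}+g_{y}^{2})-(f_{x}g_{x}+f_{y}g_{y})^{2}$, which by Lagrange's identity collapses to $1+f_{x}^{2}+f_{y}^{2}+g_{x}^{2}+g_{y}^{2}+(f_{x}g_{y}-f_{y}g_{x})^{2}$; the same expansion of $\omega^{2}=EG-F^{2}$ from (\ref{FirstFF 2}) gives the identical expression, so the determinant equals $\omega^{2}\geq1>0$. Pointwise invertibility then shows that $(\triangle_{\Sigma}f,\triangle_{\Sigma}g)$ vanishes iff $(\mathcal{L}_{\Sigma}f,\mathcal{L}_{\Sigma}g)$ does, establishing (a)$\iff$(c) and completing the cycle. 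The only real obstacle I anticipate is bookkeeping: organizing the Leibniz expansion so that the coefficient-derivative terms visibly collapse into $\triangle_{\Sigma}x$ and $\triangle_{\Sigma}y$, and checking the determinant identity through Lagrange's identity rather than by a brute-force expansion.
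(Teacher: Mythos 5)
Your argument is correct, and it rests on the same two ingredients as the paper's proof --- the identity $\triangle_{\Sigma}\mathbf{\Phi}=\mathbf{H}$ and the normality of the mean curvature vector --- but it organizes them differently. The paper disposes of (a)$\Leftrightarrow$(b) by citing Osserman's Euler--Lagrange computation for the area functional, and proves (b)$\Leftrightarrow$(c) by exhibiting the decomposition
\[
\mathbf{H}\;=\;\frac{\mathcal{P}}{\omega}\,\mathbf{\Phi}_{x}+\frac{\mathcal{Q}}{\omega}\,\mathbf{\Phi}_{y}
+\frac{1}{\omega^{2}}\left(0,\,0,\,{\mathcal{L}}_{\Sigma}f,\,{\mathcal{L}}_{\Sigma}g\right),
\qquad
\left(\mathcal{P},\mathcal{Q}\right)=\left(\omega\,\triangle_{\Sigma}x,\;\omega\,\triangle_{\Sigma}y\right),
\]
so that under (c) the vector $\mathbf{H}$ is simultaneously tangential and normal, hence zero, while $\mathbf{H}=0$ forces $\mathcal{P}=\mathcal{Q}=0$ from the first two slots and then ${\mathcal{L}}_{\Sigma}f={\mathcal{L}}_{\Sigma}g=0$ from the last two. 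Your identity $\triangle_{\Sigma}u=\frac{1}{\omega^{2}}{\mathcal{L}}_{\Sigma}u+u_{x}\,\triangle_{\Sigma}x+u_{y}\,\triangle_{\Sigma}y$ is precisely the componentwise content of that display, and your two relations for $\triangle_{\Sigma}x,\triangle_{\Sigma}y$ are the same normality input $\mathbf{H}\cdot\mathbf{\Phi}_{x}=\mathbf{H}\cdot\mathbf{\Phi}_{y}=0$. Where you diverge is in then eliminating the base components and inverting the $2\times 2$ matrix with entries $1+f_{x}^{2}+f_{y}^{2}$, $f_{x}g_{x}+f_{y}g_{y}$, $1+g_{x}^{2}+g_{y}^{2}$, whose determinant you correctly identify with $\omega^{2}\geq 1$ via Lagrange's identity; the paper never needs this inversion because it reads the conclusion off the tangential/vertical splitting directly. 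Your route is slightly more computational but buys a self-contained proof of (a)$\Leftrightarrow$(b) (which the paper outsources to a citation), and the determinant identity $\det=\omega^{2}$ is a pleasant byproduct; both arguments are sound.
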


\begin{proof}
The equivalence of \textbf{(a)} and \textbf{(b)} follows from \cite[Equation (3.14) in Section 2]{Osserman 1986}, which indicates that the Euler-Lagrange system for the area functional of the graph is
\[
\frac{\partial}{\partial x}  \left( \frac{G}{\omega} \begin{bmatrix}   f_x \\ g_x \end{bmatrix}   \; - 
 \frac{F}{\omega}  \begin{bmatrix}   f_y \\ g_y \end{bmatrix}  \; \right) + \frac{\partial}{\partial y}  \left( - \frac{F}{\omega}  \begin{bmatrix}   f_x \\ g_x \end{bmatrix}  \; +
 \frac{E}{\omega}  \begin{bmatrix}   f_y \\ g_y \end{bmatrix}   \; \right) = \begin{bmatrix}  0 \\ 0 \end{bmatrix} \;\;
\Leftrightarrow \;\; {\triangle}_{{}_{\Sigma}} \begin{bmatrix}   f_x \\ g_x \end{bmatrix} = \begin{bmatrix}  0 \\ 0 \end{bmatrix}.
\]
There are several ways to establish the equivalence of \textbf{(b)} and \textbf{(c)}:  \cite[Section 2, p. 16-17]{Osserman 1986}, 
\cite[Section 2]{Lawson Osserman 1977}, \cite[Section 1.2]{Becker Kahn 2014}  (for arbitrary codimension), 
\cite[Appendix: The minimal surface system]{Micallef White1984}, and \cite[Example 1]{Eells 1979} (for more general ambient spaces).
 Here, we adopt the argument in the proof of \cite[Theorem 2.2]{Osserman 1969}. 
We use the formula (\ref{LBO}) and introduce
\begin{equation} \label{PQ pair 1}
\left( \mathcal{P},  \mathcal{Q} \right)  = \left( \,  \frac{\partial}{\partial x}  \left( \frac{G}{\omega} \right) - \frac{\partial}{\partial y}  \left( \frac{F}{\omega} \right) , \, 
\frac{\partial}{\partial y}  \left( \frac{E}{\omega} \right) - \frac{\partial}{\partial x}  \left( \frac{F}{\omega} \right) \, \right) . 
\end{equation}
to obtain the identity for the mean curvature vector $\mathbf{H}(x,y) :={\triangle}_{{}_{\Sigma}}  \mathbf{\Phi}(x,y)$ explicitly$:$
\begin{equation} \label{MCV general}
\mathbf{H}  =  
{\triangle}_{{}_{\Sigma}}  \begin{bmatrix}  x  \\  y  \\   f(x,y) \\   g(x,y) \end{bmatrix}
= \frac{1}{\omega}  \begin{bmatrix}   \mathcal{P}  \\   \mathcal{Q}  \\ 
  \mathcal{P}  f_{x} +  \mathcal{Q}  f_{y} +  \frac{ 1 }{{\omega}} {\mathcal{L}}_{{}_{\Sigma}} f  \\ 
 \mathcal{P}   g_{x} + \mathcal{Q}  g_{y} +\frac{ 1 }{{\omega}} {\mathcal{L}}_{{}_{\Sigma}} g  \end{bmatrix}
  = \frac{\mathcal{P}}{\omega}  {\mathbf{\Phi}}_{x} +  \frac{\mathcal{Q}}{\omega}  {\mathbf{\Phi}}_{y}  + \frac{1}{{\omega}^{2}}  \begin{bmatrix}  0  \\  0  \\ {\mathcal{L}}_{{}_{\Sigma}} f  \\  {\mathcal{L}}_{{}_{\Sigma}} g  \end{bmatrix}.
\end{equation}
First, assume \textbf{(b)}. Since the mean curvature vector $\mathbf{H}$ vanishes on the minimal surface, we find that (\ref{MCV general}) guarantees that four quantities 
$\mathcal{P}$, $\mathcal{Q}$, ${\mathcal{L}}_{{}_{\Sigma}}   f$, ${\mathcal{L}}_{{}_{\Sigma}}   g$ vanish. So, \textbf{(c)} holds. 
Second, assume  \textbf{(c)}. 
Since ${\mathcal{L}}_{{}_{\Sigma}}   f = 0$ and  ${\mathcal{L}}_{{}_{\Sigma}} g = 0$, we find that, by (\ref{MCV general}), the mean curvature vector $\mathbf{H}$ is equal to the tangent vector $\frac{\mathcal{P}}{\omega}  {\mathbf{\Phi}}_{x} +  \frac{\mathcal{Q}}{\omega}  {\mathbf{\Phi}}_{y}$. As the mean curvature vector $\mathbf{H}$ is normal to the 
graph $\Sigma$, we conclude that $\mathbf{H}$ vanishes. So, \textbf{(b)} holds. 
 \end{proof}
 
 \begin{remark}[\textbf{Minimal surface operator ${\mathcal{L}}_{\Sigma}$ and Laplace-Beltrami operator ${\triangle}_{\Sigma}$}] \label{two equiv operators}
When we assume that the two dimensional minimal graph $\Sigma$ is minimal in ${\mathbb{R}}^{4}$, we obtain 
\[
  {\triangle}_{\Sigma} = \frac{G}{{\omega}^{2}}  \frac{{\partial}^{2} }{ \partial x^{2}} - 2\frac{F}{{\omega}^{2}}  \frac{{\partial}^{2}}{ \partial x \partial y} + \frac{E}{{\omega}^{2}} \frac{{\partial}^{2}}{ \partial y^{2}}, \quad  \text{or briefly,} \quad   {\triangle}_{\Sigma} = \frac{1}{{\omega}^{2}} {\mathcal{L}}_{\Sigma}.
\] 
The minimality of the graph $\Sigma$ implies the two interesting identities
\begin{equation} \label{classical equalities}
\frac{\partial}{\partial y} \left( \frac{F}{\omega} \right) = \frac{\partial}{\partial x} \left( \frac{G}{\omega} \right) \quad \text{and} \quad 
\frac{\partial}{\partial y} \left( \frac{E}{\omega} \right) = \frac{\partial}{\partial x} \left( \frac{F}{\omega} \right),
\end{equation} 
which imply 
\[
 {\triangle}_{\Sigma}  = \frac{1}{{\omega}^{2}} {\mathcal{L}}_{\Sigma} + \left[\frac{\partial}{\partial x} \left( \frac{G}{\omega} \right) - \frac{\partial}{\partial y} \left( \frac{F}{\omega} \right) \right] \frac{\partial}{\partial x} +  \left[ \frac{\partial}{\partial y} \left( \frac{E}{\omega} \right) -  \frac{\partial}{\partial x} \left( \frac{F}{\omega} \right) \right] \frac{\partial}{\partial y} =  \frac{1}{{\omega}^{2}} {\mathcal{L}}_{\Sigma}.  
\]
A geometric implication of  (\ref{classical equalities}) on minimal graphs can be found in Rado's book \cite[p. 108]{Rado 1933}.  
A variational proof of (\ref{classical equalities}) is given in Osserman's book \cite[Chapter 3]{Osserman 1986}.
Another interpretation of (\ref{classical equalities}) (via conjugate minimal surface) is illustrated in 
Remark \ref{Geometric meaning of Lagrange potential}. 
\end{remark}

By Bers' Theorem \cite{Bers 1951, Finn 1953, Osserman 1973}, an isolated singularity of a single valued solution to the minimal surface equation  
is removable. However, in higher codimensions, the minimal surface system can have solutions with isolated non-removable singularities.

 \begin{example}[\textbf{Two dimensional minimal graphs in ${\mathbb{R}}^{4}$ over the punctured plane}]  \label{punctured example}
 Let $N$ be a positive integer. We shall modify the entire holomorphic graph in ${\mathbb{C}}^{2}={\mathbb{R}}^{4}$
 \[
\Sigma_{\textrm{entire}} = \left\{\; \left(x, y, \textbf{Re}\left[ {\left( x+ iy\right)}^{N} \right], \textbf{Im}\left[ {\left( x+ iy\right)}^{N} \right] \right)
  \in {\mathbb{R}}^{4} \; \vert \; (x,y) \in {\mathbb{R}}^{2} \; \right\}.
 \]
 Let ${\textbf{T}}_{N}\left(\zeta\right)$ denote the Chebyshev polynomial of the first kind and degree $N$. The Chebyshev polynomials 
 are determined by the recurrence relation
 \[
{\textbf{T}}_{0}\left(\zeta\right)=1, \;\; {\textbf{T}}_{1}\left(\zeta\right)=\zeta, \;\; {\textbf{T}}_{2}\left(\zeta\right)=2{\zeta}^{2}-1, \;\;
 {\textbf{T}}_{k+2}\left(\zeta\right) =2 \zeta {\textbf{T}}_{k+1}\left(\zeta\right) - {\textbf{T}}_{k}\left(\zeta\right) \, \text{for} \; k \geq 0.
 \]
The Chebyshev polynomial ${\textbf{T}}_{N}\left(\zeta\right)$ is characterized by the properties   
 \[
 {\textbf{T}}_{N}\left(\zeta\right) = 
 \begin{cases}
   \cosh \left( N\, \text{arcosh}  \, \zeta \right)  \quad \text{for} \;\, \zeta \in \left[1, \infty\right], \\
   \cos \left( N\,  \arccos \, \zeta \right)  \quad \text{for} \;\, \zeta \in \left[-1, 1\right].
 \end{cases}
 \]
 We construct the following minimal graph over the punctured plane
  \[
\Sigma_{N} = \left\{\; \left(x, y, {\Psi}_{N}(\rho)  \, \textbf{Re}\left[ {\left( x+ iy\right)}^{N} \right],  {\Psi}_{N}( \rho ) \, \textbf{Im}\left[ {\left( x+ iy\right)}^{N} \right] \right)
  \in {\mathbb{R}}^{4} \; \vert \; (x,y) \in {\mathbb{R}}^{2}-\{\left(0, 0\right)\} \; \right\},
 \]
 where we introduce the radially symmetric weight function ${\Psi}_{N}(\rho)$: 
 \[
  \rho=\sqrt{x^{2}+y^{2}} \quad \text{and} \quad {\Psi}_{N}(\rho)= \frac{1}{N} \cdot \frac{  {\textbf{T}}_{N}\left( \sqrt{1+ {\rho}^{2}} \right)      }{{\rho}^{N}} = 
  \frac{1}{N} \cdot \frac{  {\textbf{T}}_{N}\left( \sqrt{1+ x^2 +y^2} \right)      }{{\left( x^2 +y^2 \right)}^{\frac{N}{2}}}. 
 \] 
 \begin{enumerate}
  \item[\textbf{(a)}] The new coordinates $\left(t, \theta \right) \in \left(0, \infty \right) \times \left[0, 2\pi \right]$ with 
  $\left(x, y \right)=\left(\sinh t \cos \theta, \sinh t \sin \theta \right)$ induce the conformal harmonic 
  patch which admits the extension $\mathbf{X}_{N}(t, \theta)$: 
 \[
(t, \theta) \in {\mathbb{R}} \times \left[0, 2\pi \right]  \to \left( \sinh t \cos \theta, \sinh t \sin \theta, \frac{1}{N}  \cosh \left( N t \right) \cos \left( N \theta \right),  \frac{1}{N} 
  \cosh \left( N t \right) \sin \left( N \theta \right)            \right) 
 \]
 of the minimal surface $\widetilde{\Sigma_{N}}$ in ${\mathbb{R}}^{4}$ with the induced conformal metric
 \[
  {\mathbf{g}}_{\widetilde{\Sigma_{N}}} 
  = \left[ {\cosh}^{2}t + {\sinh}^{2}  \left( N t \right)  \right] \left( dt^2 + {d\theta}^{2} \right) 
  = \left[ {\sinh}^{2}t + {\cosh}^{2}  \left( N t \right)  \right] \left( dt^2 + {d\theta}^{2} \right). 
 \]
 \item[\textbf{(b)}] When $N=1$, we have the minimal gradient graph ${\Sigma}_{1}$ explicitly given by  
 \[
\left\{\; \left( \; x, \, y, \, x \sqrt{1+\frac{1}{x^2 +y^2 \,}}, \,y \sqrt{1+\frac{1}{x^2 +y^2 \,}} \; \right)
  \in {\mathbb{R}}^{4} \;\; \vert \;\; (x,y) \in {\mathbb{R}}^{2}-\{\left(0, 0\right)\} \; \right\},
 \]
As Osserman \cite{Osserman 1973} observed, we see that the pair of height functions 
\[
\left(f(x,y), g(x,y) \right)=  \sqrt{1+x^2 +y^2}\left(\frac{x}{\sqrt{x^2 +y^2}}, \frac{y}{\sqrt{x^2 +y^2}}\right)
\]
has an isolated singularity at the origin. Neither height functions $f(x,y)$ nor $g(x,y)$ tends to a limit at the origin. 
 The \textbf{Lagrangian catenoid} (\cite[Theorem 3.5]{Harvey Lawson 1982} and \cite[Example 6.4]{Lee 2017}) realizes 
 the surface given by the patch $\mathbf{X}_{1}(t, \theta)$.
 \item[\textbf{(c)}] When $N=2$, the minimal graph ${\Sigma}_{2}$ becomes a rational variety  
  \[
 \left\{\; \left( \; x, \, y, \, \left( x^2 -y^2 \right) \left( 1+ \frac{1}{2 \left( x^2 +y^2 \, \right)} \right)  , \,\left( 2xy \right) \left( 1+ \frac{1}{2 \left( x^2 +y^2 \,\right)} \right) \; \right)
  \in {\mathbb{R}}^{4} \;\; \vert \;\; (x,y) \in {\mathbb{R}}^{2}-\{\left(0, 0\right)\} \; \right\}.
 \]
It is the graph of the non-holomorphic function $ \zeta =x+iy \in \mathbb{C}-\{0\} \mapsto {\zeta}^{2} + \frac{1}{2}  {\left(\, \overline{\zeta} \,\right)}^{-1}   \zeta$.
  The \textbf{Fraser-Schoen band} \cite[Section 7]{Fraser Schoen 2016} is the minimal embedding  $\mathbf{X}_{2}(t, \theta)$ of a M\"{o}bius band 
 into ${\mathbb{R}}^{4}$. This induces, after a suitable rescaling, the free boundary minimal surface in the four dimensional unit ball
 whose  coordinate functions become first Steklov eigenfunctions \cite[Proposition 7.1]{Fraser Schoen 2016}.
 \end{enumerate}
 \end{example}
 
\begin{remark}
Osserman \cite{Osserman 1973} extended Bers' Theorem to the minimal 
surface system in arbitrary codimensions, assuming the continuity of all but one of the height functions. 
\end{remark}

\begin{example}[\textbf{Two parameter family of minimal graphs in ${\mathbb{R}}^{4}$ connecting catenoids, helicoids in ${\mathbb{R}}^{3}$, and complex logarithmic graph in ${\mathbb{C}}^{2}$}]  \label{catenoid helicoid complex logarithmic graph}
 Given a pair $(\alpha, \beta) \in {\mathbb{R}}^{2}$ of constants, we associate the following two dimensional graph ${\Sigma}_{\left(\alpha, \beta\right)}$ in ${\mathbb{R}}^{4}$
\[
{\Sigma}_{\left(\alpha, \beta\right)} =\left\{\left(x, y, \alpha \ln \left( \frac{\sqrt{x^2 +y^2} +\sqrt{x^2 +y^2 +{\beta}^{2} - {\alpha}^{2}}  }{2} \right), 
\beta \arctan\left( \frac{y}{x} \right) \right) \in {\mathbb{R}}^{4} \; \vert \; (x,y) \in \Omega \, \right\}.
\]
The pair $\left( f(x,y), g(x,y) \right) = \left( \alpha \ln \left( \frac{r +\sqrt{r^2 +{\beta}^{2} - {\alpha}^{2}}  }{2} \right), 
\beta \arctan\left( \frac{y}{x} \right)   \right)$ with $r=\sqrt{x^2 +y^2}$ solves the minimal surface system (\ref{MSS quote}) in 
Theorem \ref{MSS quote}. We shall distinguish the three cases:
\begin{enumerate}
\item[\textbf{(a)}] When $(\alpha, \beta)=\left(\cosh \lambda, \sinh \lambda \right)$ for some constant $\lambda \in \mathbb{R}$, recall the identity
\[
\textrm{arcosh} \, r=\ln \left(r+\sqrt{r^2 -1} \right), \quad r \geq 1.
\]
We see that, up to translations, the graph ${\Sigma}_{\left(\alpha, \beta\right)}$ is congruent to  
\[
{\Sigma}^{+}_{\lambda} = \left\{\left(x, y, \left( \cosh \lambda \right) \, \textrm{arcosh} \left(\sqrt{x^2 +y^2}\right), \left( \sinh \lambda \right)\, \arctan\left( \frac{y}{x} \right) \right) \in {\mathbb{R}}^{4} \; \vert \; x^2 +y^2 \geq 1, \, x \neq 0 \right\}.
\]
When  $\lambda=0$, the surface ${\Sigma}^{+}_{0}$ recovers the \textbf{catenoid} foliated by circles. 
\item[\textbf{(b)}] When $(\alpha, \beta)=\left(\lambda, \lambda \right)$ for some  $\lambda \in \mathbb{R}-\{0\}$, the non-planar minimal surface ${\Sigma}_{\left(\alpha, \beta\right)}$ in ${\mathbb{R}}^{4}$ can be identified as the \textbf{complex logarithmic graph} in ${\mathbb{C}}^{2}$:
\[
{\Sigma}^{0}_{\lambda} =  \left\{  \left(\zeta,  \lambda \log \zeta\right)  \in {\mathbb{C}}^{2} \; \vert \; \zeta \in \mathbb{C}-\{0\} \right\}.
\]
\item[\textbf{(c)}] When $(\alpha, \beta)=\left(\sinh \lambda, \cosh \lambda \right)$ for some constant $\lambda \in \mathbb{R}$, recall the identity
\[
\textrm{arsinh} \, r=\ln \left(r+\sqrt{r^2 +1} \right), \quad r \in \mathbb{R}.
\] 
We find that, up to translations, ${\Sigma}_{\left(\alpha, \beta\right)}$ is congruent to  
\[
{\Sigma}^{-}_{\lambda} =  \left\{\left(x, y, \left( \sinh \lambda \right)\, \textrm{arsinh} \left(\sqrt{x^2 +y^2}\right), \left(\cosh \lambda\right)\, \arctan\left( \frac{y}{x} \right) \right) \in {\mathbb{R}}^{4} \; \vert \; (x,y) \in 
 \left( {\mathbb{R}}-\{0\} \right) \times {\mathbb{R}} \, \right\}.
\]
In the case when  $\lambda=0$,  the surface ${\Sigma}^{-}_{0}$ recovers the \textbf{helicoid} foliated by lines. 
\end{enumerate}
\end{example}

\begin{proposition}[\textbf{Cauchy--Riemann equations on the two dimensional minimal graph}]  \label{CR on graph lemma}
Assume that the graph $\Sigma =\left\{   \left(x, y, f(x, y), g(x,y) \right) \in {\mathbb{R}}^{4} \, \vert \, (x,y) \in \Omega \right\}$ is 
 minimal in ${\mathbb{R}}^{4}$. Then, the $\mathbb{C}$-valued function ${\mathcal{A}}(x, y)+i {\mathcal{B}}(x,y)$ is holomorphic 
 on $\Sigma$ when we have  
\begin{equation} \label{CR on graph}
  \begin{bmatrix} {\mathcal{A}}_x \\ {\mathcal{A}}_y \end{bmatrix}  =    \begin{bmatrix}  \frac{E}{\omega} & \frac{F}{\omega} 
  \\ \frac{F}{\omega} & \frac{G}{\omega} \end{bmatrix}    \begin{bmatrix}   {\mathcal{B}}_y \\ -  {\mathcal{B}}_x \end{bmatrix},
 \quad \text{or equivalently,} \quad 
   \begin{bmatrix} {\mathcal{B}}_x \\ {\mathcal{B}}_y \end{bmatrix}  =  -  \begin{bmatrix}  \frac{E}{\omega} & \frac{F}{\omega} 
  \\ \frac{F}{\omega} & \frac{G}{\omega} \end{bmatrix}    \begin{bmatrix}   {\mathcal{A}}_y \\ - {\mathcal{A}}_x \end{bmatrix}.
\end{equation}  
\end{proposition}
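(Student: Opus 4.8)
The plan is to read the linear system (\ref{CR on graph}) as the coordinate expression, in the graph chart $(x,y)$, of the single intrinsic equation $d\mathcal{B} = \ast\, d\mathcal{A}$, where $\ast$ is the Hodge star operator of the induced metric $\mathbf{g}_{\Sigma}$; this is exactly the statement that $\mathcal{B}$ is a harmonic conjugate of $\mathcal{A}$, i.e. that $\mathcal{A}+i\mathcal{B}$ is holomorphic on $\Sigma$ with respect to the conformal (Riemann surface) structure that the minimal immersion $\mathbf{\Phi}$ induces. The structural fact I would lean on is that on a surface the Hodge star acting on $1$-forms is conformally invariant, so it depends only on the conformal class of $\mathbf{g}_{\Sigma}$ and may be computed directly in the graph coordinates $(x,y)$, even though these are not isothermal.

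First I would record the characterization of holomorphicity. Choosing a local isothermal parameter $z=u+iv$ in which $\mathbf{g}_{\Sigma} = \Lambda\,(du^{2}+dv^{2})$, one has $\ast\, du = dv$ and $\ast\, dv = -du$, so that $d\mathcal{B} = \ast\, d\mathcal{A}$ unpacks to $\mathcal{B}_{v} = \mathcal{A}_{u}$ and $\mathcal{B}_{u} = -\mathcal{A}_{v}$, which are precisely the Cauchy--Riemann equations for $\mathcal{A}+i\mathcal{B}$ as a function of $z$. Thus $d\mathcal{B} = \ast\, d\mathcal{A}$ is equivalent to holomorphicity, and, applying $d$ and using $d^{2}=0$, it forces $d(\ast\, d\mathcal{A}) = 0$ and $d(\ast\, d\mathcal{B}) = 0$; that is, both functions are $\triangle_{\Sigma}$-harmonic, consistent with (\ref{LBO}) and Remark \ref{two equiv operators}.

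Next I would compute $\ast$ explicitly in $(x,y)$. From the inverse of $\mathbf{g}_{\Sigma}$, namely $g^{11}=G/\omega^{2}$, $g^{12}=-F/\omega^{2}$, $g^{22}=E/\omega^{2}$, together with the area form $\omega\, dx\wedge dy$, the defining relation $\alpha\wedge\ast\beta = \langle\alpha,\beta\rangle\, dA_{\Sigma}$ yields
\[
\ast\, dx = \frac{F}{\omega}\, dx + \frac{G}{\omega}\, dy, \qquad \ast\, dy = -\frac{E}{\omega}\, dx - \frac{F}{\omega}\, dy,
\]
so that $\ast\, d\mathcal{A} = \tfrac{1}{\omega}\big[(F\mathcal{A}_{x}-E\mathcal{A}_{y})\, dx + (G\mathcal{A}_{x}-F\mathcal{A}_{y})\, dy\big]$. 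Equating this with $d\mathcal{B} = \mathcal{B}_{x}\, dx + \mathcal{B}_{y}\, dy$ reproduces verbatim the second (``or equivalently'') system of (\ref{CR on graph}).

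Finally, the equivalence of the two matrix forms in (\ref{CR on graph}) is just the identity $\ast\ast = -\mathrm{id}$ on $1$-forms: applying $\ast$ to $d\mathcal{B} = \ast\, d\mathcal{A}$ gives $d\mathcal{A} = -\ast\, d\mathcal{B}$, which is the first displayed system. At the level of matrices this is the inversion
\[
\left(\frac{1}{\omega}\begin{bmatrix} E & F \\ F & G\end{bmatrix}\right)^{-1} = \frac{1}{\omega}\begin{bmatrix} G & -F \\ -F & E\end{bmatrix},
\]
valid since $EG-F^{2}=\omega^{2}$, composed with the rotation encoding $(\mathcal{B}_{y},-\mathcal{B}_{x})\mapsto(\mathcal{A}_{y},-\mathcal{A}_{x})$. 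I expect no deep obstacle here, since the content is a verification rather than a discovery; the only step demanding genuine care is the bookkeeping of orientation and sign in the Hodge star, which must be pinned down so that the system produces the holomorphic, and not the anti-holomorphic, Cauchy--Riemann equations.
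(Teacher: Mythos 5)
Your argument is correct, and the computations check out: with $dA_\Sigma=\omega\,dx\wedge dy$ fixing the orientation, your formulas $\ast\,dx=\tfrac{F}{\omega}dx+\tfrac{G}{\omega}dy$ and $\ast\,dy=-\tfrac{E}{\omega}dx-\tfrac{F}{\omega}dy$ are right, $d\mathcal{B}=\ast\,d\mathcal{A}$ reproduces the second system of (\ref{CR on graph}) exactly, and $\ast\ast=-\mathrm{id}$ gives the equivalence with the first system. But your route is genuinely different from the paper's. You phrase holomorphicity intrinsically as the harmonic-conjugate relation $d\mathcal{B}=\ast\,d\mathcal{A}$ and invoke the conformal invariance of the Hodge star on $1$-forms, which lets you compute in the non-isothermal graph chart; the cost is that you take for granted the existence of local isothermal coordinates (needed both to define the complex structure on $\Sigma$ and to verify that $d\mathcal{B}=\ast\,d\mathcal{A}$ unpacks to the Cauchy--Riemann equations), a classical but non-elementary fact for a general metric. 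The paper instead builds the isothermal chart explicitly and cheaply: minimality supplies the divergence identities (\ref{classical equalities}), these give potentials $\mathbf{M},\mathbf{N}$ with $\nabla\mathbf{M}=(F/\omega,G/\omega)$ and $\nabla\mathbf{N}=(E/\omega,F/\omega)$, and $\Xi=(x+\mathbf{M},y+\mathbf{N})$ is Osserman's conformal coordinate system (with explicit conformal factor $\omega/(2+\tfrac{E}{\omega}+\tfrac{G}{\omega})$); the system (\ref{CR on graph}) then falls out of the chain rule. So the paper's proof is constructive and self-contained and is the only place the minimality hypothesis actually enters, whereas your proof is shorter, coordinate-free in spirit, and shows the proposition is really a statement about the conformal structure of any oriented surface --- minimality is only needed if one insists on producing the isothermal coordinates by hand. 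One small point to pin down in your write-up: you should say explicitly that the isothermal parameter $z=u+iv$ is chosen compatibly with the orientation $dx\wedge dy>0$, since otherwise your $\ast$ produces the anti-holomorphic equations; with Osserman's chart $\Xi$ this compatibility is automatic because its Jacobian determinant is positive.
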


\begin{proof} 
We use the special construction of the local conformal coordinates on minimal graphs to deduce the Cauchy--Riemann 
equations (\ref{CR on graph}) on $\Sigma$. Due to the identities (\ref{classical equalities}):
\[
\frac{\partial}{\partial y} \left( \frac{F}{\omega} \right) = \frac{\partial}{\partial x} \left( \frac{G}{\omega} \right) \quad \text{and} \quad 
\frac{\partial}{\partial y} \left( \frac{E}{\omega} \right) = \frac{\partial}{\partial x} \left( \frac{F}{\omega} \right),
\]
we can find the potential functions $\mathbf{M}(x,y)$ and $\mathbf{N}(x,y)$ so that
\[
 \left( \mathbf{M}_{x}, \mathbf{M}_{y} \right) = \left( \frac{F}{\omega},  \frac{G}{\omega} \right) \quad \text{and} \quad 
 \left( \mathbf{N}_{x}, \mathbf{N}_{y} \right) = \left( \frac{E}{\omega},  \frac{F}{\omega} \right), 
\] 
at least in a sufficiently small neighborhood of any point in $\Omega$. 
As in \cite[Lemma 4.4]{Osserman 1986}, 
\[
  \left(x, y \right) \to \Xi(x,y) = \left( {\xi}_{1},  {\xi}_{2} \right) =\left( x+ \mathbf{M}(x, y), y+\mathbf{N}(x,y) \right)
\]
becomes the local diffeomorphism, and gives the desired local conformal coordininates $\left( {\xi}_{1},  {\xi}_{2} \right)$ on 
the minimal graph $\Sigma$ in ${\mathbb{R}}^{4}$ equipped with the induced conformal metric 
 \[
 {\mathbf{g}}_{{\Sigma}} =  \frac{\omega}{2+\frac{E}{\omega} +\frac{G}{\omega}}   \left( d{{\xi}_{1}}^{2} +  d{{\xi}_{2}}^{2} \right).
 \] 
The $\mathbb{C}$-valued function ${\mathcal{A}}(x, y)+i {\mathcal{B}}(x,y)$ is holomorphic on $\Sigma$ with respect 
 to the conformal coordinates $\left( {\xi}_{1},  {\xi}_{2} \right)= \Xi(x,y)$ if and only if we have the Cauchy--Riemann equations 
 \[
 \begin{bmatrix}    \frac{\partial}{\partial {\xi}_{1}}  \left( {\mathcal{A}} \circ {\Xi}^{-1} \right)  \\    
 \frac{\partial}{\partial {\xi}_{2}}  \left( {\mathcal{A}} \circ {\Xi}^{-1} \right)  \end{bmatrix}  = 
 \begin{bmatrix}  \;  \frac{\partial}{\partial {\xi}_{2}}  \left( {\mathcal{B}} \circ {\Xi}^{-1} \right)  \\    
- \frac{\partial}{\partial {\xi}_{1}}  \left( {\mathcal{B}} \circ {\Xi}^{-1} \right)  \end{bmatrix},
\]
which could be transformed to the desired system (\ref{CR on graph}) via the chain rule.
\end{proof}

\begin{remark}[\textbf{Beltrami equations, \cite{Beyerstedt 1991}}] The system 
$ \begin{bmatrix} {\mathcal{B}}_x \\ {\mathcal{B}}_y \end{bmatrix}  =  -  \begin{bmatrix}  \frac{E}{\omega} & \frac{F}{\omega} 
  \\ \frac{F}{\omega} & \frac{G}{\omega} \end{bmatrix}    \begin{bmatrix}   {\mathcal{A}}_y \\ - {\mathcal{A}}_x \end{bmatrix}$ is 
  the Beltrami equations associated to the metric ${\mathbf{g}}_{{\Sigma}} = E dx^{2} + 2F dx dy  + G dy^{2}$ 
  with $\omega=\sqrt{EG-F^{2}}$.
\end{remark}

\section{Generalized Gauss map and Osserman system of the first order}

The main point of this section is to build our Osserman system of the first order, which generalizes the Cauchy--Riemann equations,
and to provide the proof of Theorem \ref{why osserman}, which illustrates the birth of the Osserman system. 
  As in \cite{Chern Osserman 1967, Hoffman Osserman 1980, Osserman 1969, Osserman 1986}, we introduce the generalized Gauss map of minimal surfaces in ${\mathbb{R}}^{4}$. Inside the complex projective space  ${\mathbb{C}}{\mathbb{P}}^{3}$, we prepare the complex hyperquadric
\[
{\mathcal{Q}_{2}}=\{ \, \left[ z_1 : z_2 : z_3 : z_4 \right] \in {\mathbb{CP}}^{3} \;  \vert \;  {z_1}^{2}+{z_2}^{2}+{z_3}^{2}+{z_4}^{2}=0 \, \}.
\]

\begin{definition}[\textbf{Generalized Gauss map of minimal surfaces in ${\mathbb{R}}^{4}$,  \cite[Section 2]{{Osserman 1986}}}] \label{def of generalized gauss map}
Let ${\Sigma}^{2}$ be a minimal surface in ${\mathbb{R}}^{4}$. Consider a conformal harmonic immersion $X:\Sigma \rightarrow {\mathbb{R}}^{4}$, $\xi \mapsto X(\xi)$. The generalized Gauss map of $\Sigma$ is the map
$\mathcal{G}:\Sigma \rightarrow {\mathcal{Q}}_{2} \subset {\mathbb{C}}{\mathbb{P}}^{3}$ defined by 
\[
   \mathcal{G}(\xi)=\left[ \;   \overline{ \frac{\partial X}{\partial {\xi}} } \; \right]
   = \left[ \frac{\partial X}{\partial {\xi}_{1}}  + i \frac{\partial X}{\partial {\xi}_{2}} \right] \in {\mathcal{Q}}_{2}.
\]
The conformality of the immersion $X$ guarantees that the generalized Gauss map is a well-defined ${\mathcal{Q}}_{2}$-valued function. 
The harmonicity of the immersion $X$ guarantees that the generalized Gauss map is anti-holomorphic.
\end{definition}

 \begin{lemma}[\textbf{Generalized Gauss map of two dimensional minimal graphs in ${\mathbb{R}}^{4}$}] \label{Gauss map for minimal graphs}
 Let 
 \[
 {\Sigma}^{2}=\left\{\,  \left(x, y, f(x, y), g(x,y) \right) \in {\mathbb{R}}^{4} \, \vert \, (x,y) \in \Omega \, \right\}
\]
 denote a minimal graph in ${\mathbb{R}}^{4}$ with the metric $E dx^{2} + 2F dx dy  + G dy^{2}$ and 
 $\omega= \sqrt{EG -F^2}$. Its generalized Gauss map $\mathcal{G}:\Omega \rightarrow {\mathcal{Q}}_{2} \subset {\mathbb{C}}{\mathbb{P}}^{3}$ can be
 explicitly given in terms of the coordinates $(x, y):$
\begin{eqnarray*} \label{graph gauss map}
  \mathcal{G}(x,y) &=&  \left[ z_1 : z_2 : z_3 : z_4 \right] \\
   &=& \left[  \frac{G}{\omega}: \;  i- \frac{F}{\omega}: \;  \frac{G}{\omega} f_x +  \left(  i - \frac{F}{\omega} \right)  f_y : \;  \frac{G}{\omega} g_x + \left(  i - \frac{F}{\omega} \right)  g_y \right]  \\
 &=& \left[  1-i\frac{F}{\omega}: \; i\frac{E}{\omega}: \; \left(  1 - i \frac{F}{\omega} \right) f_x + i \frac{E}{\omega}  f_y: \; \left(  1 - i \frac{F}{\omega} \right)  g_x + i \frac{E}{\omega} g_y  \right].
    \end{eqnarray*}
\end{lemma}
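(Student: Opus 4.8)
The plan is to compute the generalized Gauss map straight from Definition~\ref{def of generalized gauss map}, using the local conformal coordinates $(\xi_{1},\xi_{2})=\Xi(x,y)=(x+\mathbf{M},y+\mathbf{N})$ on $\Sigma$ that were produced in the proof of Proposition~\ref{CR on graph lemma}. Writing $X=\mathbf{\Phi}\circ\Xi^{-1}$, the Gauss map is the projective class of $\partial_{\xi}X:=X_{\xi_{1}}+iX_{\xi_{2}}$. The first observation is that, since $X_{\xi_{1}}$ and $X_{\xi_{2}}$ are tangent to $\Sigma$, this complex vector lies in the complexified tangent plane, so $\partial_{\xi}X=z_{1}\mathbf{\Phi}_{x}+z_{2}\mathbf{\Phi}_{y}$ for some $z_{1},z_{2}\in\mathbb{C}$. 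Because $\mathbf{\Phi}_{x}=(1,0,f_{x},g_{x})$ and $\mathbf{\Phi}_{y}=(0,1,f_{y},g_{y})$, reading off the four entries immediately gives $z_{3}=z_{1}f_{x}+z_{2}f_{y}$ and $z_{4}=z_{1}g_{x}+z_{2}g_{y}$, which is exactly the structure asserted in the lemma. Hence the entire content reduces to identifying the projective pair $[z_{1}:z_{2}]$, namely the first two coordinates of $\partial_{\xi}X$.

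Since the first two components of $X$ are just the ambient coordinates $x$ and $y$, one has $z_{1}=x_{\xi_{1}}+ix_{\xi_{2}}$ and $z_{2}=y_{\xi_{1}}+iy_{\xi_{2}}$, quantities that depend only on the coordinate change $\Xi$ and not on $f,g$. I would obtain them by inverting the Jacobian $d\Xi$, whose entries are built from $E/\omega,F/\omega,G/\omega$ via the identities \eqref{classical equalities} defining $\mathbf{M},\mathbf{N}$, and whose determinant equals $2+\tfrac{E}{\omega}+\tfrac{G}{\omega}>0$, matching the conformal factor recorded in Proposition~\ref{CR on graph lemma} and confirming that $\Xi$ is an orientation-preserving diffeomorphism. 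Inverting $d\Xi$ and forming $x_{\xi_{1}}+ix_{\xi_{2}}$ and $y_{\xi_{1}}+iy_{\xi_{2}}$ then expresses $[z_{1}:z_{2}]$ as an explicit ratio of expressions in $E/\omega,F/\omega,G/\omega$, of the shape $[\,(1+\tfrac{G}{\omega})-i\tfrac{F}{\omega}:-\tfrac{F}{\omega}+i(1+\tfrac{E}{\omega})\,]$.

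The main step is the algebraic simplification of this ratio to the two stated normal forms, and the decisive input is the first-fundamental-form identity $EG-F^{2}=\omega^{2}$, equivalently $\tfrac{E}{\omega}\tfrac{G}{\omega}-(\tfrac{F}{\omega})^{2}=1$. Using it, I expect to verify the single cross-multiplication $z_{1}(i-\tfrac{F}{\omega})=z_{2}\tfrac{G}{\omega}$, which shows $[z_{1}:z_{2}]=[\tfrac{G}{\omega}:i-\tfrac{F}{\omega}]$ and yields the first representative. The second representative $[\,1-i\tfrac{F}{\omega}:i\tfrac{E}{\omega}\,]$ defines the same point of $\mathbb{CP}^{3}$, since its slope $\tfrac{iE/\omega}{1-iF/\omega}$ reduces to $\tfrac{-F+i\omega}{G}$ after clearing the denominator by means of $\omega^{2}+F^{2}=EG$.

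Conceptually, the cleanest cross-check is that conformality forces $\partial_{\xi}X\cdot\partial_{\xi}X=|X_{\xi_{1}}|^{2}-|X_{\xi_{2}}|^{2}+2i\,X_{\xi_{1}}\cdot X_{\xi_{2}}=0$, so $[z_{1}:z_{2}]$ must be one of the two isotropic directions of the tangent plane, i.e. a root $\tau=z_{2}/z_{1}$ of $G\tau^{2}+2F\tau+E=0$, namely $\tau=\tfrac{-F\pm i\omega}{G}$; both displayed forms correspond to the root with $\mathrm{Im}\,\tau=\omega/G>0$. I therefore expect the only genuinely delicate point to be fixing this sign, that is, confirming that the Gauss map selects the positively oriented isotropic line rather than its conjugate. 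This is forced by $\det d\Xi>0$, and it is automatically correct in the direct Jacobian computation above, where no choice of branch is ever made; the nullity argument serves only as a structural explanation and an independent verification.
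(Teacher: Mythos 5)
Your proposal is correct, but it takes a genuinely different route from the paper: the paper gives no in-text proof of Lemma \ref{Gauss map for minimal graphs} at all, deferring entirely to \cite[Proposition 6]{Lee 2013} (itself traced back to Osserman's lemma). Your argument is a self-contained derivation using only material already developed in the paper, namely the conformal coordinates $\Xi$ from Proposition \ref{CR on graph lemma}. The chain-rule observation $X_{\xi_1}+iX_{\xi_2}=z_1{\mathbf{\Phi}}_x+z_2{\mathbf{\Phi}}_y$ with $z_1=x_{\xi_1}+ix_{\xi_2}$, $z_2=y_{\xi_1}+iy_{\xi_2}$ correctly reduces the whole lemma to identifying $[z_1:z_2]$; inverting the Jacobian gives $[(1+\tfrac{G}{\omega})-i\tfrac{F}{\omega}:-\tfrac{F}{\omega}+i(1+\tfrac{E}{\omega})]$, and the cross-multiplication against $[\tfrac{G}{\omega}:i-\tfrac{F}{\omega}]$ does close using $\tfrac{E}{\omega}\tfrac{G}{\omega}-\tfrac{F^2}{\omega^2}=1$ (both sides reduce to $\tfrac{G}{\omega}\bigl[-\tfrac{F}{\omega}+i(1+\tfrac{E}{\omega})\bigr]$), as does the equivalence with the second representative. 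The isotropy cross-check $G\tau^2+2F\tau+E=0$ and the orientation discussion ($\det d\Xi>0$ forcing the root $\tau=(-F+i\omega)/G$ with positive imaginary part) are exactly the right way to pin down the one genuinely delicate sign. One caution: your determinant $2+\tfrac{E}{\omega}+\tfrac{G}{\omega}$ corresponds to the assignment $\nabla(\xi_1-x)=(\tfrac{E}{\omega},\tfrac{F}{\omega})$, $\nabla(\xi_2-y)=(\tfrac{F}{\omega},\tfrac{G}{\omega})$; with the paper's literal pairing $\Xi=(x+\mathbf{M},y+\mathbf{N})$ and its stated gradients of $\mathbf{M},\mathbf{N}$ the determinant would instead be $2F/\omega$. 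This is evidently a slip in the paper (its own conformal factor $\omega/(2+\tfrac{E}{\omega}+\tfrac{G}{\omega})$ requires the former assignment), and your computation uses the correct one, but you should write the Jacobian out explicitly so the reader sees which convention is in force. What your route buys is a proof internal to the paper and a transparent explanation of why the Gauss map lands on the prescribed isotropic direction; what the paper's citation buys is only brevity.
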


\begin{proof}
For the details of the deduction of Lemma \ref{Gauss map for minimal graphs}, we refer to \cite[Proposition 6]{Lee 2013}, which 
was inspired by the equality in \cite[Lemma, p. 290]{Osserman 1968}. 
\end{proof}
 
 \begin{definition}[\textbf{Degenerate minimal surfaces in ${\mathbb{R}}^{4}$,  \cite[Section 2, p. 122]{Osserman 1986}}] \label{degenerate def}
 We say that a minimal surface $\Sigma$ in ${\mathbb{R}}^{4}$ is \textbf{degenerate} if the image of its ${\mathcal{Q}}_{2}$-valued  generalized 
 Gauss map lies in a hyperplane of the complex projective space ${\mathbb{CP}}^{3}$. 
\end{definition}

\begin{remark}[\textbf{Gauss maps and representation of degenerate minimal surfaces in ${\mathbb{R}}^{4}$}]
For a geometric illustration of generalized Gauss map of degenerate minimal surfaces, we refer to 
\cite[Figure 1]{Chen Goes 1983}. As known in \cite[Theorem 4.7]{Hoffman Osserman 1980}, degenerate minimal 
surfaces  in ${\mathbb{R}}^{4}$ could be described by an explicit representation analogous to 
the Enneper-Weierstrass representation formula for minimal surfaces  in ${\mathbb{R}}^{3}$. 
\end{remark}

 \begin{remark}[\textbf{Degeneracy of entire two dimensional minimal graphs in arbitrary codimensions,  \cite[Chapter 5]{Osserman 1986}}]
Extending Bernstein's Theorem that the only entire minimal graphs in ${\mathbb{R}}^{3}$ are planes, 
Osserman established that the generalized Gauss map of entire two dimensional minimal graphs 
in ${\mathbb{R}}^{n+2 \geq 4}$ are degenerate. He also determined an explicit representation formula for 
entire two dimensional minimal graphs in ${\mathbb{R}}^{4}$ in terms of a single non-vanishing holomorphic function.
Though Osserman's complete non-planar minimal surfaces in ${\mathbb{R}}^{4}$ are entire graphs, they are not stable, except the holomorphic curves.
Micallef  \cite[Corollary 5.1]{Micallef 1984} established that if an entire two dimensional non-planar minimal graph in ${\mathbb{R}}^{4}$ is stable,
then it is a holomorphic curve in ${\mathbb{C}}^{2}$.
 \end{remark}

\begin{definition}[\textbf{Osserman system}]   \label{Osserman system DEF} 
Let ${\Sigma}$ be the graph in ${\mathbb{R}}^{4}$ of the pair $\left( f(x, y), g(x,y)  \right)$ of height functions 
defined on the domain $\Omega$:
\[
 \Sigma =\left\{\, \mathbf{\Phi}(x,y) = \left(x, y, f(x, y), g(x,y) \right) \in {\mathbb{R}}^{4} \, \vert \, (x,y) \in \Omega \, \right\}.
\]
We recall that the induced metric ${\mathbf{g}}_{{\Sigma}}$ and the area element on the surface $\Sigma$ reads
\[
{\mathbf{g}}_{{\Sigma}} = E dx^{2} + 2F dx dy  + G dy^{2}, \quad  dA_{\Sigma} = \omega \, dx \, dy, \quad \omega= \sqrt{EG -F^2},
\]
where the coefficients of the first fundamental form are determined by 
\[
E = {\mathbf{\Phi}}_{x} \cdot {\mathbf{\Phi}}_{x} = 1+ {f_{x}}^{2}+{g_{x}}^{2}, \;
F = {\mathbf{\Phi}}_{x} \cdot {\mathbf{\Phi}}_{y} =   f_{x} f_{y}  +  g_{x} g_{y}, \;
G = {\mathbf{\Phi}}_{y} \cdot {\mathbf{\Phi}}_{y} = 1+ {f_{y}}^{2}+{g_{y}}^{2}.
\]
Given a constant $\mu \in \mathbb{R}-\{0\}$, we introduce 
\begin{equation} \label{OSS definition 1}
  \begin{bmatrix}   f_x \\ f_y \end{bmatrix}  =  \mu   \begin{bmatrix}  \frac{E}{\omega} & \frac{F}{\omega} 
  \\ \frac{F}{\omega} & \frac{G}{\omega} \end{bmatrix}    \begin{bmatrix}   g_y \\ -  g_x \end{bmatrix},   
\end{equation}  
or equivalently,   
\begin{equation} \label{OSS definition 2}
  \begin{bmatrix}   g_x \\ g_y \end{bmatrix}  = - \frac{1}{\mu}   \begin{bmatrix}  \frac{E}{\omega} & \frac{F}{\omega} 
  \\ \frac{F}{\omega} & \frac{G}{\omega} \end{bmatrix}    \begin{bmatrix}   f_y \\ -  f_x \end{bmatrix},
\end{equation}  
which will be called the Osserman system with the coefficient $\mu \in \mathbb{R}-\{0\}$.
\end{definition}
 
 \begin{remark}
The definition $ \omega= \sqrt{EG -F^2}$ yields the formula
\[
{ \begin{bmatrix}  \frac{E}{\omega} & \frac{F}{\omega} 
  \\ \frac{F}{\omega} & \frac{G}{\omega} \end{bmatrix} } ^{-1} = \begin{bmatrix}  \frac{G}{\omega} & - \frac{F}{\omega} 
  \\ - \frac{F}{\omega} & \frac{E}{\omega} \end{bmatrix}.
  \] 
One can use this to check that the two systems (\ref{OSS definition 1}) and (\ref{OSS definition 2}) are equivalent to each other.
 \end{remark}
 
\begin{theorem}[\textbf{Minimality and degeneracy of Osserman graphs in ${\mathbb{R}}^{4}$}] \label{why osserman}
When the pair $\left( f(x,y), g(x,y) \right)$ satisfies the Osserman system (\ref{OSS definition 1}) with the coefficient  $\mu \in \mathbb{R}-\{0\}$, 
the graph $\Sigma =\left\{\,  \left(x, y, f(x, y), g(x,y) \right) \in {\mathbb{R}}^{4} \, \vert \, (x,y) \in \Omega \, \right\}$ is minimal in ${\mathbb{R}}^{4}$. 
Moreover, its generalized Gauss map lies on the hyperplane $z_{3} + i \mu z_{4} =0$ of the complex projective space ${\mathbb{CP}}^{3}$.
\end{theorem}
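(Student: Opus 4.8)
The plan is to establish the two assertions in sequence: first the minimality of $\Sigma$, and then, using that minimality, the degeneracy of the generalized Gauss map. The engine of both steps is the observation that the Osserman system (\ref{OSS definition 1})--(\ref{OSS definition 2}), once rearranged, furnishes four pointwise identities relating the weighted first derivatives of $f$ and $g$. Explicitly, the second form (\ref{OSS definition 2}) gives $\frac{G}{\omega} f_x - \frac{F}{\omega} f_y = \mu g_y$ and $-\frac{F}{\omega} f_x + \frac{E}{\omega} f_y = -\mu g_x$, while the first form (\ref{OSS definition 1}) gives $\frac{G}{\omega} g_x - \frac{F}{\omega} g_y = -\frac{1}{\mu} f_y$ and $-\frac{F}{\omega} g_x + \frac{E}{\omega} g_y = \frac{1}{\mu} f_x$.

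For minimality I would verify condition \textbf{(a)} of Theorem \ref{MSS}, namely $\triangle_\Sigma f = \triangle_\Sigma g = 0$, working directly from the divergence form (\ref{LBO}) of the Laplace-Beltrami operator. Applying (\ref{LBO}) to $f$, the two inner expressions $\frac{G}{\omega} f_x - \frac{F}{\omega} f_y$ and $-\frac{F}{\omega} f_x + \frac{E}{\omega} f_y$ collapse, via the identities above, to $\mu g_y$ and $-\mu g_x$, so that $\omega \triangle_\Sigma f = \mu \left( \frac{\partial g_y}{\partial x} - \frac{\partial g_x}{\partial y} \right) = 0$ by the symmetry of second derivatives. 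The computation for $g$ is the mirror image, using the first form of the system, and yields $\omega \triangle_\Sigma g = \frac{1}{\mu} \left( \frac{\partial f_x}{\partial y} - \frac{\partial f_y}{\partial x} \right) = 0$. Theorem \ref{MSS} then promotes condition \textbf{(a)} to condition \textbf{(b)}, so $\Sigma$ is minimal. I would emphasize that this argument never presupposes minimality: the divergence form (\ref{LBO}) is available for an arbitrary graph, so there is no circularity.

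With minimality secured, Lemma \ref{Gauss map for minimal graphs} supplies the explicit generalized Gauss map, in particular $z_3 = \frac{G}{\omega} f_x + \left( i - \frac{F}{\omega} \right) f_y$ and $z_4 = \frac{G}{\omega} g_x + \left( i - \frac{F}{\omega} \right) g_y$. Grouping real and imaginary parts and substituting the very same four identities gives $z_3 = \mu g_y + i f_y$ and $z_4 = -\frac{1}{\mu} f_y + i g_y$, whence $z_3 + i \mu z_4 = \left( \mu g_y + i f_y \right) + \left( -i f_y - \mu g_y \right) = 0$. This places the image of $\mathcal{G}$ on the hyperplane $z_3 + i \mu z_4 = 0$, completing the proof.

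The only genuine subtlety, and the point I would be most careful about, is the logical ordering. One is tempted to read the Osserman system as the Cauchy--Riemann system (\ref{CR on graph}) with $\mathcal{A} = f$ and $\mathcal{B} = \mu g$ and to invoke Proposition \ref{CR on graph lemma}; but that proposition presupposes that $\Sigma$ is already minimal, since its conformal-coordinate construction rests on the minimal-surface identities (\ref{classical equalities}). Hence the holomorphicity interpretation cannot be used to prove minimality, and minimality must instead be extracted directly from the divergence form as above. Beyond this, the system is nonlinear---$E$, $F$, $G$, and $\omega$ all depend on the first derivatives of $f$ and $g$---but every verification reduces to algebraic substitution followed by equality of mixed partials, so no analytic difficulty arises once the solution is assumed to be $C^2$.
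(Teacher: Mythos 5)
Your proposal is correct and follows essentially the same route as the paper: minimality is obtained by substituting the Osserman identities into the divergence form (\ref{LBO}) so that $\triangle_{\Sigma}f$ and $\triangle_{\Sigma}g$ reduce to equality of mixed partials, and degeneracy follows by substituting the same identities into the explicit Gauss map of Lemma \ref{Gauss map for minimal graphs} to get $z_{3}+i\mu z_{4}=0$. Your closing remark on why Proposition \ref{CR on graph lemma} cannot be invoked to prove minimality is a sound observation consistent with the paper's (implicit) logical ordering.
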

 
\begin{proof} 
To show the minimality of the graph
 $\Sigma =\left\{\,  \left(x, y, f(x, y), g(x,y) \right) \in {\mathbb{R}}^{4} \, \vert \, (x,y) \in \Omega \, \right\}$,
we employ Theorem \ref{MSS}.
Indeed, we use the equalities (\ref{OSS definition 2}) to obtain
\begin{eqnarray*} 
{\triangle}_{{}_{\Sigma}}  f  &=&    \frac{1}{\omega} \left[ \; \frac{\partial}{\partial x}  \left( \frac{G}{\omega} f_x \; - 
 \frac{F}{\omega} f_y\; \right) + \frac{\partial}{\partial y}  \left( - \frac{F}{\omega} f_x \; +
 \frac{E}{\omega} f_y  \; \right)  \; \right]   \\
 &=&  \frac{1}{\omega} \left[ \; \frac{\partial}{\partial x}  \left(  \mu g_y \right) + \frac{\partial}{\partial y}  \left(  - \mu g_x  \; \right)  \; \right]  \\
 &=& 0,
 \end{eqnarray*}
 and use the equalities in  (\ref{OSS definition 1})  to obtain
 \begin{eqnarray*} 
{\triangle}_{{}_{\Sigma}}  g  &=&    \frac{1}{\omega} \left[ \; \frac{\partial}{\partial x}  \left( \frac{G}{\omega} g_x \; - 
 \frac{F}{\omega} g_y\; \right) + \frac{\partial}{\partial y}  \left( - \frac{F}{\omega} g_x \; +
 \frac{E}{\omega} g_y  \; \right)  \; \right]   \\
 &=&  \frac{1}{\omega} \left[ \; \frac{\partial}{\partial x}  \left(  -\frac{1}{\mu} f_y \right) + \frac{\partial}{\partial y}  \left(  \frac{1}{\mu} f_x  \; \right)  \; \right]  \\
 &=& 0.
 \end{eqnarray*}
 To prove the degeneracy of the minimal graph  $\Sigma$, we exploit Lemma \ref{Gauss map for minimal graphs}. 
  Its generalized Gauss map $\mathcal{G}:\Omega \rightarrow {\mathcal{Q}}_{2} \subset {\mathbb{C}}{\mathbb{P}}^{3}$ can be
 explicitly given in terms of the coordinates $(x, y):$
\[
  \mathcal{G}(x,y) = \left[ z_1 : z_2 : z_3 : z_4 \right] = \left[  \frac{G}{\omega}: \;  i- \frac{F}{\omega}: \;  \frac{G}{\omega} f_x +  \left(  i - \frac{F}{\omega} \right)   f_y : \;  \frac{G}{\omega} g_x + \left(  i - \frac{F}{\omega} \right)  g_y \right],  
\]
 The Osserman systems (\ref{OSS definition 1}) and (\ref{OSS definition 2}) yield
 \[
\left(\, f_{y}, \, g_{y} \, \right) = \left(\, \mu \left( \frac{F}{\omega} g_{y} -\frac{G}{\omega} g_{x} \right), \,
\frac{1}{\mu}  \left( \frac{G}{\omega} f_{x} -\frac{F}{\omega} f_{y} \right) \,  \right),
 \]
 which can be complexified to 
\begin{equation} \label{birth OSS}
\frac{G}{\omega} f_x +  \left(  i - \frac{F}{\omega} \right)   f_y = - i \mu \left( \frac{G}{\omega} g_x + \left(  i - \frac{F}{\omega} \right)  g_y \right). 
\end{equation} 
Therefore, by the above formula, its generalized Gauss map $\mathcal{G}$ lies on the hyperplane 
\[
z_{3} + i \mu z_{4} =0.
\]
\end{proof}

\section{Applications of Lagrange potential on minimal graphs in ${\mathbb{R}}^{3}$} 

It would be difficult to find explicit examples of  non-holomorphic minimal graphs in ${\mathbb{R}}^{4}$ 
by directly solving the minimal surface system of the second order. We 
provide a fundamental construction, which yields explicit examples of two dimensional minimal graphs 
in ${\mathbb{R}}^{4}$, whose height functions satisfy the Osserman system of the first order. 

\begin{lemma}[\textbf{Lagrange potential on minimal graphs in ${\mathbb{R}}^{3}$}] \label{Lagrange potential}
Let $\Omega \subset {\mathbb{R}}^{2}$ be a simply connected domain. Consider the two dimensional 
graph $\mathfrak{S}$ of the ${\mathcal{C}}^{2}$ function $p : \Omega \to \mathbb{R}:$
\[
 \mathfrak{S} =  \left\{   \left(x, y, p(x, y) \right) \in {\mathbb{R}}^{3} \, \vert \, (x,y) \in \Omega \right\}.
\]
Then, the following two statements are equivalent:
\begin{enumerate}
\item[\textbf{(a)}]  The graph $\mathfrak{S}$ is a minimal surface in ${\mathbb{R}}^{3}$.
\item[\textbf{(b)}]  There exists a function $q : \Omega \to \mathbb{R}$ satisfying the  Lagrange system
\begin{equation}  \label{Lag conjugate}
 \left(q_{x}, q_{y} \right)= \left( - \frac{ p_y }{\sqrt{1+{p_{x}}^{2}+{p_{y}}^{2} }}, \frac{ p_x }{\sqrt{1+{p_{x}}^{2}+{p_{y}}^{2} }}    \right),
\end{equation}  
and the gradient estimate 
\begin{equation}  \label{Lag potential spacelike}
  {q_{x}}^{2}+{q_{y}}^{2}<1.
\end{equation}
\end{enumerate}
\end{lemma}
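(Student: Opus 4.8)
The plan is to recognize the Lagrange system \eqref{Lag conjugate} as the assertion that the exact differential of $q$ equals a specific $1$-form manufactured from $p$, and then to read off both implications from the Poincar\'e lemma on the simply connected domain $\Omega$. Abbreviate $W = \sqrt{1 + p_x^2 + p_y^2}$ and introduce the $1$-form
\[
\eta = -\frac{p_y}{W}\, dx + \frac{p_x}{W}\, dy .
\]
The system \eqref{Lag conjugate} is precisely the statement $dq = \eta$. A direct computation of the exterior derivative gives
\[
d\eta = \left[ \frac{\partial}{\partial x}\!\left( \frac{p_x}{W} \right) + \frac{\partial}{\partial y}\!\left( \frac{p_y}{W} \right) \right] dx \wedge dy ,
\]
so that $\eta$ is closed if and only if $p$ solves the minimal surface equation in divergence form, which is exactly the condition that the graph $\mathfrak{S}$ be minimal in ${\mathbb{R}}^{3}$. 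This single identity is the heart of the proof.

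For the implication \textbf{(a)} $\Rightarrow$ \textbf{(b)}, I would start from minimality, which by the divergence-form minimal surface equation yields $d\eta = 0$. Since $\Omega$ is simply connected, the Poincar\'e lemma furnishes a function $q : \Omega \to \mathbb{R}$ with $dq = \eta$; this is exactly the Lagrange system \eqref{Lag conjugate}. The gradient estimate \eqref{Lag potential spacelike} then costs nothing: squaring and adding the two components of \eqref{Lag conjugate} gives
\[
q_x^2 + q_y^2 = \frac{p_x^2 + p_y^2}{1 + p_x^2 + p_y^2} < 1 ,
\]
the strict inequality holding because the numerator falls short of the denominator by exactly $1$.

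For the converse \textbf{(b)} $\Rightarrow$ \textbf{(a)}, note first that a $q$ obeying \eqref{Lag conjugate} is automatically of class ${\mathcal{C}}^{2}$, since its first derivatives are ${\mathcal{C}}^{1}$ expressions in the first derivatives of the ${\mathcal{C}}^{2}$ function $p$. Equality of the mixed second partials, $q_{xy} = q_{yx}$, then reproduces $d\eta = 0$, that is, the minimal surface equation, whence $\mathfrak{S}$ is minimal. I expect no genuine obstacle here: the only conceptual step is the identification of the integrability condition $d\eta = 0$ with the minimal surface equation, and everything else is either an elementary algebraic identity (the gradient estimate, which is in fact automatic and so plays no essential role in this direction) or a routine appeal to the Poincar\'e lemma, where simple connectivity of $\Omega$ is what upgrades a locally defined potential $q$ to a globally defined one.
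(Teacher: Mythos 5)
Your proposal is correct and follows essentially the same route as the paper: both identify the Lagrange system with the exactness of the one-form $-\frac{p_y}{W}\,dx+\frac{p_x}{W}\,dy$, equate its closedness with the divergence-form minimal surface equation, invoke the Poincar\'e lemma on the simply connected domain, and obtain the gradient estimate from the algebraic identity $1-q_x^2-q_y^2=\frac{1}{1+p_x^2+p_y^2}$. Your explicit treatment of the converse via equality of mixed partials is only a slight elaboration of what the paper leaves implicit in its chain of equivalences.
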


\begin{proof}
The graph $\mathfrak{S}$ is minimal in ${\mathbb{R}}^{3}$ if and only if the height function $p(x, y)$ satisfies 
\begin{equation}  \label{Lag MSE}
0 =   \frac{\partial}{\partial x} \left(  \frac{ p_x }{\sqrt{1+{p_{x}}^{2}+{p_{y}}^{2} }}  \right)+  
 \frac{\partial}{\partial y} \left(  \frac{ p_y }{\sqrt{1+{p_{x}}^{2}+{p_{y}}^{2} }}  \right), \quad \left(x, y \right) \in \Omega,
\end{equation}  
which indicates that the one form
\begin{equation}  \label{Lag one form}
    \, - \frac{ p_y }{\sqrt{1+{p_{x}}^{2}+{p_{y}}^{2} }} dx + \frac{ p_x }{\sqrt{1+{p_{x}}^{2}+{p_{y}}^{2} }} dy 
\end{equation}  
is closed. Since $\Omega$ is simply connected, by Poincar\'{e} Lemma, it means that (\ref{Lag one form}) is exact:
\[
 - \frac{ p_y }{\sqrt{1+{p_{x}}^{2}+{p_{y}}^{2} }} dx + \frac{ p_x }{\sqrt{1+{p_{x}}^{2}+{p_{y}}^{2} }} dy =  dq = q_{x} dx +q_{y} dy
\]
for some potential function $q : \Omega \to \mathbb{R}$, up to an additive constant. The gradient estimate (\ref{Lag potential spacelike}) immediately 
follows from the identity $1-{q_{x}}^{2}-{q_{y}}^{2} = \frac{1}{ 1+{p_x}^{2}+{p_y}^{2}  }$.
\end{proof}
 
\begin{remark}[\textbf{Lagrange potentials and conjugate surfaces of minimal graphs in ${\mathbb{R}}^{3}$}] \label{Geometric meaning of Lagrange potential}
The exactness of the one form (\ref{Lag one form}) on the minimal graph is discovered by Lagrange \cite{Lagrange 1760}, 
who deduced the minimal surface equation (\ref{Lag MSE}). The Lagrange potential plays a critical role in the existence theory of Jenkins-Serrin 
minimal graphs \cite[Section 3]{Jenkins Serrin 1966} obtained by solving the minimal surface equation with infinite boundary value problems.
 In Theorem \ref{Lagrange deformation}, we shall use the Lagrange potential to build a deformation of minimal graphs in ${\mathbb{R}}^{3}$ defined on a domain to degenerate minimal graphs in ${\mathbb{R}}^{4}$ defined on the same domain.  As a particular case of the system (\ref{CR on graph}) in 
 Proposition \ref{CR on graph lemma}, when the graph 
 \[
 \mathfrak{S}=  \left\{ \, \left( x_{1}(x,y),  x_{2}(x,y),  x_{3}(x,y) \right)= \left(x, y, p(x, y) \right)\,  \in {\mathbb{R}}^{3} \, \vert \, (x,y) \in \Omega \right\}
 \]
is minimal in ${\mathbb{R}}^{3}$, we find that,
 for each $k \in \left\{1, 2, 3 \right\}$, the function $x_{k} + i  {x_{k}^{*}}$ is holomorphic on $\mathfrak{S}$, whenever we have the Cauchy-Riemann 
 equations on the minimal graph $\mathfrak{S}$:
\begin{equation}  \label{conjugate 1 2 3}
   \begin{bmatrix} {\left(x_{k}^{*} \right)}_x \\ {\left(x_{k}^{*}\right)}_y \end{bmatrix}  =    \begin{bmatrix}  \frac{p_x p_y}{\sqrt{1+{p_x}^{2}+{p_y}^{2}}} & - \frac{ 1+{p_x}^{2} }{\sqrt{1+{p_x}^{2}+{p_y}^{2}}} 
  \\   \frac{ 1+{p_y}^{2}}{\sqrt{1+{p_x}^{2}+{p_y}^{2}}} & -\frac{  p_x p_y  }{\sqrt{1+{p_x}^{2}+{p_y}^{2}}} \end{bmatrix}    \begin{bmatrix}   {\left(x_{k} \right)}_x \\ {\left(x_{k}\right)}_y \end{bmatrix}.
\end{equation}
\begin{enumerate}
\item[{\textbf{(a)}}]  The conjugate surface ${\mathfrak{S}}^{*}=  \left\{ \, \left( x_{1}^{*}(x,y),  x_{2}^{*}(x,y),  x_{3}^{*}(x,y) \right)\,  \in {\mathbb{R}}^{3} \, \vert \, (x,y) \in \Omega \right\}$ becomes a minimal surface locally isometric to  $\mathfrak{S}$. According to Krust 
Theorem \cite[p. 122]{Dierkes Hildebrandt Sauvigny 2010}, whenever  the domain $\Omega$ of the initial minimal graph  $\mathfrak{S}$ is \emph{convex}, its conjugate minimal surface ${\mathfrak{S}}^{*}$ can be represented as a graph on some domain ${\Omega}^{*}$. 
\item[{\textbf{(b)}}] Taking $k=3$ in (\ref{conjugate 1 2 3}) yields the Lagrange system (\ref{Lag conjugate}), which is equivalent to
\begin{equation}  \label{Lag conjugate here}
   \begin{bmatrix} q_x \\ q_y \end{bmatrix}  =    \begin{bmatrix}  \frac{p_x p_y}{\sqrt{1+{p_x}^{2}+{p_y}^{2}}} & - \frac{ 1+{p_x}^{2} }{\sqrt{1+{p_x}^{2}+{p_y}^{2}}} 
  \\   \frac{ 1+{p_y}^{2}}{\sqrt{1+{p_x}^{2}+{p_y}^{2}}} & -\frac{  p_x p_y  }{\sqrt{1+{p_x}^{2}+{p_y}^{2}}} \end{bmatrix}    \begin{bmatrix}  p_x \\ p_y \end{bmatrix},
\end{equation}
The function $p+i q$ is holomorphic on  $\mathfrak{S}$ (with respect to the classical conformal coordinates constructed in Proposition \ref{CR on graph lemma}).
   \item[{\textbf{(c)}}] We combine the gradient estimation (\ref{Lag potential spacelike}) and the Lagrange system (\ref{Lag conjugate}) to deduce
  \begin{equation}  \label{MaxSE 1}
  \frac{\partial}{\partial x} \left(  \frac{ q_x }{\sqrt{1-{q_{x}}^{2}-{q_{y}}^{2} }}  \right)+  
 \frac{\partial}{\partial y} \left(  \frac{ q_y }{\sqrt{1-{q_{x}}^{2}-{q_{y}}^{2} }}  \right) 
 =  \frac{\partial}{\partial x} \left(  -p_{y}  \right)+  \frac{\partial}{\partial y} \left(  p_{x}  \right) =0,
\end{equation}  
or equivalently, 
  \begin{equation}  \label{MaxSE 2}
 \left( 1 - {q_{y}}^{2} \right) q_{xx} + 2 q_{x} q_{y} q_{xy} +  \left( 1 - {q_{x}}^{2} \right) q_{yy}  =0.
\end{equation}  
As a historical remark, the dual equation (\ref{MaxSE 2}) is reported in 1855 by Catalan \cite[Equation (C), p. 1020]{Catalan 1855},  where he
 discovered his minimal surface generated by a one parameter family of parabolas and contains a cycloid as a geodesic. 
Calabi found that, in his article \cite{Calabi 1970} on Bernstein type problems,  (\ref{Lag potential spacelike}) and the dual 
equation (\ref{MaxSE 1}) indicates that the graph $z=q(x,y)$ is a maximal surface (spacelike surface with zero mean curvature) in 
Lorentz-Minkowski space ${\mathbb{L}}^{3}=\left({\mathbb{R}}^{3}, dx^{2}+dy^{2}-dz^{2} \right)$. The author \cite{Lee 2013} extended the Calabi
duality between minimal graphs in ${\mathbb{R}}^{3}$ and maximal graphs in ${\mathbb{L}}^{3}$ to higher codimensions.
 \item[{\textbf{(d)}}]  Taking $k=1$ and $k=2$ in (\ref{conjugate 1 2 3})  yields two identities
\begin{equation}  \label{divergence 1}
   \frac{\partial}{\partial y} \left(  \frac{p_x p_y}{\sqrt{1+{p_x}^{2}+{p_y}^{2}}}  \right) =   \frac{\partial}{\partial x} \left(  \frac{ 1+{p_y}^{2}}{\sqrt{1+{p_x}^{2}+{p_y}^{2}}}  \right),
\end{equation} 
and 
\begin{equation}  \label{divergence 2}
  \frac{\partial}{\partial y} \left(  \frac{1+{p_x}^{2}}{\sqrt{1+{p_x}^{2}+{p_y}^{2}}}  \right) =   \frac{\partial}{\partial x} \left(  \frac{  p_x p_y    }{\sqrt{1+{p_x}^{2}+{p_y}^{2}}}  \right).
\end{equation} 
Following previous notations, these two equalities can be rewritten as  
\begin{equation}
\frac{\partial}{\partial y} \left( \frac{F}{\omega} \right) = \frac{\partial}{\partial x} \left( \frac{G}{\omega} \right) \quad \text{and} \quad 
\frac{\partial}{\partial y} \left( \frac{E}{\omega} \right) = \frac{\partial}{\partial x} \left( \frac{F}{\omega} \right).
\end{equation} 
\end{enumerate}
 \end{remark}
 
 \bigskip

\begin{theorem}[\textbf{Degenerate minimal graphs in ${\mathbb{R}}^{4}$ derived from minimal graphs in ${\mathbb{R}}^{3}$}] \label{Lagrange deformation}
Let ${\Sigma}_{0}$ be the minimal graph of the ${\mathcal{C}}^{2}$ function $p : \Omega \to \mathbb{R}$ defined on a domain  $\Omega \subset {\mathbb{R}}^{2}:$
\[
{\Sigma}_{0} =  \left\{   \left(x, y, p(x, y) \right) \in {\mathbb{R}}^{3} \, \vert \, (x,y) \in \Omega \right\}.
\]
Let $q : \Omega \to \mathbb{R}$ be the Lagrange potential in Lemma \ref{Lagrange potential}, which solves the Lagrange system
\begin{equation}  \label{Lag conjugate 2}
 \left(q_{x}, q_{y} \right)= \left( - \frac{ p_y }{\sqrt{1+{p_{x}}^{2}+{p_{y}}^{2} }}, \frac{ p_x }{\sqrt{1+{p_{x}}^{2}+{p_{y}}^{2} }}    \right). 
\end{equation}  
For any constant $\lambda \in \mathbb{R}-\{0\}$, we associate the two dimensional graph  in ${\mathbb{R}}^{4}$:
 \begin{equation}  \label{Lag potential graph}
{\Sigma}_{\lambda} =\left\{\,  \left(x, y,  \left( \cosh \lambda \right) \, p(x, y),  \left( \sinh \lambda \right) \,  q(x, y) \right) \in {\mathbb{R}}^{4} \, \vert \, (x,y) \in \Omega \, \right\}.
\end{equation}  
Then, the pair $\left(  f(x,y), g(x,y)  \right) = \left(  \left( \cosh \lambda \right) p(x, y),   \left( \sinh \lambda \right) q(x, y) \right)$ solves 
the Osserman system (\ref{OSS definition 1}) with the coefficient $\mu = \coth \lambda$. In particular, the graph ${\Sigma}_{\lambda}$ is 
minimal in ${\mathbb{R}}^{4}$. Also, we obtain the conformal invariance of the conformally changed induced metric 
 \begin{equation}  \label{conformal invariance}
\frac{1}{\sqrt{\det{ \left( {\mathbf{g}}_{{\Sigma}_{\lambda}}  \right) }}}  {\mathbf{g}}_{{\Sigma}_{\lambda}} = \frac{1}{\sqrt{\det{ \left( {\mathbf{g}}_{{\Sigma}_{0}}  \right) }}}  {\mathbf{g}}_{{\Sigma}_{0}}. 
\end{equation}
\end{theorem}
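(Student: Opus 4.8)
The plan is to reduce everything to explicit formulas for the first fundamental form of $\Sigma_\lambda$ in terms of $p$ alone, and then read off both the Osserman system and the conformal invariance. Write $W := \sqrt{1 + {p_x}^2 + {p_y}^2}$, so that $\det {\mathbf{g}}_{\Sigma_0} = (1+{p_x}^2)(1+{p_y}^2) - {p_x}^2{p_y}^2 = W^2$. Using $f = (\cosh\lambda)p$ together with the Lagrange system (\ref{Lag conjugate 2}) for $g = (\sinh\lambda)q$, I would first record
\[
f_x = (\cosh\lambda) p_x, \quad f_y = (\cosh\lambda) p_y, \quad g_x = -(\sinh\lambda)\frac{p_y}{W}, \quad g_y = (\sinh\lambda)\frac{p_x}{W}.
\]

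The key computational step, and the one I expect to carry the proof, is to show that the area element of $\Sigma_\lambda$ collapses to a perfect square. Abbreviating $a = \cosh^2\lambda$ and $b = \sinh^2\lambda$ (so $a - b = 1$), a direct expansion of the coefficients in (\ref{FirstFF 2}) gives
\[
E = 1 + a{p_x}^2 + \frac{b{p_y}^2}{W^2}, \quad F = \Big(a - \frac{b}{W^2}\Big) p_x p_y, \quad G = 1 + a{p_y}^2 + \frac{b{p_x}^2}{W^2}.
\]
After the ${p_x}^2{p_y}^2$ cross terms cancel in $EG - F^2$ and one invokes ${p_x}^2 + {p_y}^2 = W^2 - 1$ together with $1 + b = a$, I expect the clean factorization
\[
EG - F^2 = \frac{(aW^2 - b)^2}{W^2}, \qquad \text{so} \qquad \omega = \frac{\cosh^2\lambda\, W^2 - \sinh^2\lambda}{W}
\]
(the positive root being legitimate since $aW^2 - b \geq a - b = 1 > 0$). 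This closed form for $\omega$ is the crux; every remaining assertion follows from it by routine algebra.

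Given $\omega$, I would verify the Osserman system (\ref{OSS definition 1}) with $\mu = \coth\lambda$ directly. Its first component asks that $E g_y - F g_x = \frac{\omega}{\mu}\,f_x = \omega\,(\sinh\lambda)\,p_x$; substituting the formulas above and simplifying with ${p_x}^2 + {p_y}^2 = W^2 - 1$ should reduce the left side to $(\sinh\lambda)\frac{p_x}{W}(aW^2 - b) = \omega\,(\sinh\lambda)\,p_x$, and the second component $F g_y - G g_x = \omega\,(\sinh\lambda)\,p_y$ follows by the identical computation with $p_x \leftrightarrow p_y$. Minimality of $\Sigma_\lambda$ in ${\mathbb{R}}^4$ is then immediate from Theorem \ref{why osserman}.

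Finally, for the conformal invariance (\ref{conformal invariance}), I would observe that the same formulas yield
\[
E = \frac{aW^2 - b}{W^2}(1 + {p_x}^2), \quad F = \frac{aW^2 - b}{W^2}\, p_x p_y, \quad G = \frac{aW^2 - b}{W^2}(1 + {p_y}^2),
\]
each identity reducing once more to $1 + b = a$; hence ${\mathbf{g}}_{\Sigma_\lambda} = \frac{aW^2-b}{W^2}\,{\mathbf{g}}_{\Sigma_0}$, so the two induced metrics are pointwise conformal. In two dimensions the normalization $\frac{1}{\sqrt{\det {\mathbf{g}}}}\,{\mathbf{g}}$ always has unit determinant and is therefore unchanged under multiplication of ${\mathbf{g}}$ by a positive scalar function, so (\ref{conformal invariance}) follows at once. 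The only genuine obstacle is the factorization of $EG - F^2$; once the cross terms are seen to cancel and $a - b = 1$ is applied, the perfect-square structure makes the rest mechanical.
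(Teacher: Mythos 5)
Your proposal is correct and follows essentially the same route as the paper: both hinge on the closed form $\omega = \cosh^2\lambda\, W - \sinh^2\lambda/W$ obtained from the perfect-square factorization of $EG-F^2$, then verify the Osserman system row by row and read off the conformal relation from $\left(E/\omega,\,F/\omega,\,G/\omega\right) = \left((1+p_x^2)/W,\; p_xp_y/W,\; (1+p_y^2)/W\right)$. The only cosmetic difference is that the paper reaches the factorization via the identity $EG-F^2 = 1 + |\nabla f|^2 + |\nabla g|^2 + (f_xg_y - f_yg_x)^2$ rather than by expanding $E$, $F$, $G$ directly.
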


\begin{proof} Our goal is to verify the Osserman system (\ref{OSS definition 1}) with the coefficient $\mu = \coth \lambda$:
 \begin{equation}    \label{OSS definition 3}
  \begin{bmatrix}   f_x \\ f_y \end{bmatrix}  =  \coth \lambda  \begin{bmatrix}  \frac{E}{\omega} & \frac{F}{\omega} 
  \\ \frac{F}{\omega} & \frac{G}{\omega} \end{bmatrix}    \begin{bmatrix}   g_y \\ -  g_x \end{bmatrix}.  
 \end{equation} 
Taking $W=\sqrt{1+{p_{x}}^{2}+{p_{y}}^{2} } \geq 1$ and using the system (\ref{Lag conjugate 2}), we have   
\begin{equation}  \label{Lag conjugate 3}
 \left(q_{x}, q_{y} \right)= \left( - \frac{ p_y }{W}, \frac{ p_x }{W}    \right) \quad \text{and} \quad {q_{x}}^{2}+{q_{y}}^{2} = \frac{W^2 -1}{W^2}. 
\end{equation}  
We use the definition $\left(  f, g )  \right) = \left(  \left( \cosh \lambda \right) p,   \left( \sinh \lambda \right) q  \right)$  to deduce  
\begin{eqnarray*} 
 {\omega}^{2} &=&  EG - F^2 \\
  &=&    \left(  1+ {f_{x}}^{2}+{g_{x}}^{2} \right) \left( 1+ {f_{y}}^{2}+{g_{y}}^{2} \right) - {\left( f_{x} f_{y}  +  g_{x} g_{y}\right)}^{2}\\
  &=&  1 + \left(  {f_{x}}^{2} +  {f_{y}}^{2} \right)  + \left(  {g_{x}}^{2} +  {g_{y}}^{2} \right) +  { \left(  f_x g_y -f_{y} g_{x} \right) }^2   \\
  &=& {  \left[  \left( \cosh^{2} \lambda \right) W  - \frac{\sinh^{2} \lambda}{W}    \right]}^{2}.
\end{eqnarray*} 
We observe that  
\[
\left( \cosh^{2} \lambda \right) W  - \frac{\sinh^{2} \lambda}{W} \geq  \left( \cosh^{2} \lambda \right) \cdot 1 -  \frac{\sinh^{2} \lambda}{1} =1>0,
\]
which implies that 
 \begin{equation}    \label{omega W}
\omega= \sqrt{EG -F^2} = \left( \cosh^{2} \lambda \right) W  - \frac{\sinh^{2} \lambda}{W}.
 \end{equation} 
We use (\ref{Lag conjugate 3}) to obtain the first row equality in (\ref{OSS definition 3}):
 \begin{eqnarray*} 
  \frac{E}{\omega} g_{y}  -  \frac{F}{\omega} g_{x}  &=&  \frac{1}{\omega} \cdot { \left[   \left(  1+ {f_{x}}^{2}+{g_{x}}^{2} \right)  g_{y} -  {\left( f_{x} f_{y}  +  g_{x} g_{y}\right)} g_x  \right]} \\
  &=&  \frac{1}{\omega} \cdot  \left[   \left(  1+ {f_{x}}^{2}  \right)  g_{y} -   f_{x} f_{y}    g_x  \right]  \\
 &=&   \frac{\sinh \lambda}{\omega} \cdot \frac{p_x}{W} \cdot \left[   1 + \left( \cosh^{2} \lambda \right) \left(   {p_{x}}^{2}+{p_{y}}^{2} \right)    \right]   \\
 &=&   \frac{\sinh \lambda}{\omega} \cdot \frac{p_x}{W} \cdot \left[   - \sinh^{2} \lambda + \left( \cosh^{2} \lambda \right) W^2   \right]    \\
  &=&  \left( \sinh \lambda \right)  p_{x} \\ 
  &=&  \frac{f_{x}}{\coth \lambda}.
\end{eqnarray*} 
 We omit a similar verification of the second row equality in (\ref{OSS definition 3}). Finally, the conformal invariance (\ref{conformal invariance}) comes from the equalities
 \[
 \left(\frac{E}{\omega}, \frac{F}{\omega}, \frac{G}{\omega} \right)=\left(\frac{1+{p_x}^{2}}{W}, \frac{p_x p_y }{W}, \frac{{1+{p_y}^{2}}}{W} \right).
 \]
\end{proof}
    
     We apply Theorem \ref{Lagrange deformation} to classical minimal graphs in ${\mathbb{R}}^{3}$ to find explicit examples of 
  old and new minimal graphs in ${\mathbb{R}}^{4}$.
   
\begin{example}[\textbf{One parameter family of minimal surfaces in ${\mathbb{R}}^{4}$ foliated by hyperbolas or lines}]  \label{Lee hyperbola} 
On the infinite strip 
$\Omega:=\left\{ \, \left(x, y \right) \in {\mathbb{R}}^{2} \; \vert \;    x \in \mathbb{R}    \; \text{and} \;  y \in   \left(- \frac{\pi}{2}, \frac{\pi}{2}  \right)     \right\}$,
we consider the fundamental piece of the \textbf{helicoid} in ${\mathbb{R}}^{3}$:
\[
{\Sigma}_{0} =  \left\{   \left(x, y,  x \tan y \right) \in {\mathbb{R}}^{3} \, \vert \, (x,y) \in \Omega \right\}.
\]
Solving the induced Lagrange system (\ref{Lag conjugate}) in Lemma \ref{Lagrange potential} 
\[
 \left(q_{y}, -q_{x} \right)= \left( \frac{ p_x }{\sqrt{1+{p_{x}}^{2}+{p_{y}}^{2} }}, \frac{ p_y }{\sqrt{1+{p_{x}}^{2}+{p_{y}}^{2} }}    \right)=
 \left(\frac{\cos y \sin y }{ \sqrt{ {\cos}^2 y +x^2 \, } }, \frac{x}{ \sqrt{ {\cos}^2 y +x^2 \, } } \right),
\]
we obtain $p(x, y) = x \tan y$$q(x,y)= -\sqrt{ {\cos}^2 y +x^2 \, }$, up to an additive constant.  
Applying Theorem \ref{Lagrange deformation} with any constant $\lambda \in \mathbb{R}$, we obtain the solution of the Osserman system:
\[
\left( f(x,y), g(x,y) \right) = \left(  \left(  \cosh \lambda \right)\,  x \tan y, - \sinh \lambda  \sqrt{ {\cos}^2 y +x^2 \, }  \right),
\]
which associates, after an reflection, the two dimensional minimal graph ${\Sigma}^{-}_{\lambda}$ 
in ${\mathbb{R}}^{4}:$
\[
{\Sigma}^{-}_{\lambda} =\left\{\left(x, y, \left( \cosh \lambda \right)\,  x \tan y, \, \sinh \lambda  \sqrt{ {\cos}^2 y +x^2 \, } \right) \in {\mathbb{R}}^{4} \, \vert \, (x,y) \in   {\Omega} \, \right\}.
\]
\begin{enumerate}
\item[\textbf{(a)}] When $\lambda=0$, the graph ${\Sigma}^{-}_{\lambda}$ recovers the \textbf{helicoid} in ${\mathbb{R}}^{3}$ foliated by lines. 
\item[\textbf{(b)}] Let $\lambda \neq 0$. We see that the graph ${\Sigma}^{-}_{\lambda}$ is foliated by a line (the level curve cut by the hyperplane $y=0$) or a hyperbola (the level curve cut by the hyperplane $y= y_{0} \in  \left(- \frac{\pi}{2}, 0 \right) \cup \left(0, \frac{\pi}{2}  \right)$).
\end{enumerate}
Under the coordinate transformation $(x, y)=\left( \sinh \mathcal{U}\cos \mathcal{V}, \mathcal{V} \right) \to \left(\mathcal{U}, \mathcal{V} \right)$,
we obtain the conformal harmonic patch for the minimal surface ${\Sigma}^{-}_{\lambda}$  in ${\mathbb{R}}^{4}$:
\begin{equation} \label{conformal harmonic theta -}
{\mathbf{F}}^{-}_{\theta}\left(\mathcal{U}, \mathcal{V}\right) 
= \left( \sinh \mathcal{U}\cos \mathcal{V}, \mathcal{V},  \cosh \lambda \cosh \mathcal{U}\cos \mathcal{V},  \sinh \lambda \sinh \mathcal{U} \sin \mathcal{V}  \right).
\end{equation} 
The graph  ${\Sigma}^{-}_{\lambda}$ belongs to the family of minimal surfaces discovered by the 
author \cite[Example 6.1]{Lee 2017}. It was originally discovered by an application of the so called 
parabolic rotations of holomorphic null curves in ${\mathbb{C}}^{3} \subset {\mathbb{C}}^{4}$ lifted from helicoids in ${\mathbb{R}}^{3}$.  
\end{example}
 
\begin{example}[\textbf{Hoffman-Osserman's minimal surfaces in ${\mathbb{R}}^{4}$ foliated by ellipses}] \label{HO ellipse} 
We consider a half of the \textbf{catenoid} in ${\mathbb{R}}^{3}$:
\[
{\Sigma}_{0} =  \left\{   \left(x, y,  \sqrt{ -x^2 + \cosh^{2} y\,}  \right) \in {\mathbb{R}}^{3} \, \vert \, (x,y) \in \Omega \right\},
\]
over the domain 
\begin{equation}
\Omega:=\left\{ \, \left(x, y \right) \in {\mathbb{R}}^{2} \; \vert \;   x^2 \leq \cosh^{2} y \,  \right\},
\end{equation}
which can be compared with the \textbf{exceptional domain} \cite[Section 7.2 and Figure 1]{Traizet 2014}. We find the pair 
$\left( p(x,y), q(x,y) \right)=\left( \sqrt{ -x^2 + \cosh^{2} y \,}, \, x \tanh y \right)$
 solving the Lagrange system 
\[
 \left(q_{y}, -q_{x} \right)= \left( \frac{ p_x }{\sqrt{1+{p_{x}}^{2}+{p_{y}}^{2} }}, \frac{ p_y }{\sqrt{1+{p_{x}}^{2}+{p_{y}}^{2} }}    \right)=
 \left(  \frac{- x}{\cosh^{2} y  }, \frac{\sinh y}{\cosh y} \right).
\]
Applying Theorem \ref{Lagrange deformation} with a constant $\lambda \in \mathbb{R}-\{0\}$,
we obtain the minimal graph ${\Sigma}^{+}_{\lambda}$ in ${\mathbb{R}}^{4}$:
\[
{\Sigma}^{+}_{\lambda} =\left\{\left(x, y,   \cosh \lambda \sqrt{ -x^2 + \cosh^{2} y\,} , \left(  \sinh \lambda \right) x \tanh y \right) \in {\mathbb{R}}^{4} \, \vert \, (x,y) \in   {\Omega}  \, \right\}.
\]
Under the coordinate transformation $(x, y)=\left( \cosh \mathcal{U}\cos \mathcal{V}, \mathcal{U} \right) \to \left(\mathcal{U}, \mathcal{V} \right)$,
we obtain the conformal harmonic patch for the minimal surface ${\Sigma}^{+}_{\lambda}$  in ${\mathbb{R}}^{4}$:
\begin{equation} \label{conformal harmonic theta +}
{\mathbf{F}}^{+}_{\lambda}\left(\mathcal{U}, \mathcal{V}\right) 
= \left( \cosh \mathcal{U}\cos \mathcal{V}, \mathcal{U},  \cosh \lambda \cosh \mathcal{U}\sin \mathcal{V},  \sinh \lambda \sinh \mathcal{U} \cos \mathcal{V}  \right).
\end{equation} 
This recovers Osserman-Hoffman's minimal annuli in ${\mathbb{R}}^{4}$ with total curvature $-4\pi$ (\cite[Proposition 6.6 and Remark 1]{Hoffman Osserman 1980} and \cite[Example 6.2 and Theorem 6.3]{Lee 2017}).  
\end{example}
 
 \begin{remark}[\textbf{Holomorphic null curves lifted from degenerate minimal graphs in ${\mathbb{R}}^{4}$}]  
 In Theorem \ref{Lagrange deformation}, if the initial minimal graph ${\Sigma}_{0}$ in ${\mathbb{R}}^{3}$ is  induced by the 
 holomorphic null curve $\phi = \left({\phi}_{1}(\zeta), {\phi}_{3}(\zeta), {\phi}_{3}(\zeta) \right)$  in ${\mathbb{C}}^{3}$
  with  ${{\phi}_{1}}^{2}+{{\phi}_{2}}^{2}+{{\phi}_{3}}^{2}=0$
   and a local conformal coordinate $\zeta$ on ${\Sigma}_{0}$,  the minimal graph ${\Sigma}_{\lambda}$ in ${\mathbb{R}}^{4}$ is induced by
 \[
 \left({\phi}_{1}(\zeta), {\phi}_{2}(\zeta), \left( \cosh \lambda  \right) {\phi}_{3}(\zeta),  \left( - i \sinh \lambda  \right) {\phi}_{3}(\zeta) \right)
 \]
 with the conformal coordinate $\zeta$ on  ${\Sigma}_{\lambda}$. Indeed, we see that the identity 
\[
\left( \cosh \lambda \right) ^{2} + \left( - i \sinh \lambda \right) ^{2} =1
\]
guarantees the nullity of the induced holomorphic curve in ${\mathbb{C}}^{4}$
\[
{{\phi}_{1}}^{2}+{{\phi}_{2}}^{2}+{\left[ \left( \cosh \lambda  \right) {\phi}_{3} \right]}^{2} +{\left[ \left(- i \sinh \lambda  \right) {\phi}_{3} \right]}^{2}=
{{\phi}_{1}}^{2}+{{\phi}_{2}}^{2}+{{\phi}_{3}}^{2}= 0.
\] 
For a concise survey of various deformations of holomorphic null curves in ${\mathbb{C}}^{n}$ lifted from minimal 
surfaces in ${\mathbb{R}}^{n}$, we invite interested readers to refer to \cite[Section 2]{Lee 2017}. 
 \end{remark}
  
 \begin{remark}[\textbf{Associate family of minimal surfaces in higher codimensions}]
 We point out that the conformal harmonic patches (\ref{conformal harmonic theta +}) in 
Example \ref{HO ellipse} and  (\ref{conformal harmonic theta -}) in Example \ref{Lee hyperbola} represent \emph{conjugate} 
minimal surfaces in ${\mathbb{R}}^{4}$. For the notion of associate family of locally isometric minimal surfaces in ${\mathbb{R}}^{n+2 \geq 3}$, we invite interested readers to refer to \cite{Lawson 1971}.
 \end{remark}

\begin{example}[\textbf{Doubly periodic minimal graphs in ${\mathbb{R}}^{4}$ derived from Scherk's doubly periodic graph in ${\mathbb{R}}^{3}$}] \label{deformed scherk doubly} 
Over the open square
$\Omega:=\left\{ \, \left(x, y \right) \in {\mathbb{R}}^{2} \; \vert \;    x,  y \in   \left(- \frac{\pi}{2}, \frac{\pi}{2}  \right)     \right\}$,
we define the fundamental piece of the doubly periodic graph in ${\mathbb{R}}^{3}$:
\begin{equation} \label{Scherk orthogonal}
 \left\{   \left(x, y,  \ln \left( \frac{\cos x}{\cos y} \right) \right) \in {\mathbb{R}}^{3} \, \vert \, (x,y) \in \Omega \right\},
\end{equation}  
which was originally discovered by Scherk \cite[p. 196]{Scherk 1835}. See \cite{Lee 2017 slides} for the picture of the surface.
Theorem \ref{Lagrange deformation} with a constant $\lambda \in \mathbb{R}-\{0\}$ gives the minimal graph 
 in ${\mathbb{R}}^{4}$:
\[
\left\{\left(x, y,  \left(  \cosh \lambda \right) \ln \left( \frac{\cos x}{\cos y} \right) ,
 \left(  \sinh \lambda \right) \arcsin\left( \sin x \sin y \right) \right) \in {\mathbb{R}}^{4} \, \vert \, (x,y) \in   {\Omega} \, \right\}.
\]
\end{example}

\begin{remark}[\textbf{Finn-Osserman proof of Bernstein's Theorem, \cite{Finn Osserman 1984}}] \label{Finn Osserman proof}
 We point out that Finn and Osserman used Scherk's first surface to present an elegant geometric proof of 
 Bernstein's Theorem that they only entire minimal graphs in ${\mathbb{R}}^{3}$ are planes.
\end{remark}

\begin{remark}[\textbf{Jenkins-Serrin type minimal graphs}]
Inspired by the existence of Scherk's first surfaces, Jenkins and Serrin \cite{Jenkins Serrin 1966} offers a powerful analytic method 
to extend Scherk's construction. The fundamental piece of Scherk's first surface can be obtained as 
a Jenkins-Serrin graph by solving the Dirichlet problem for the minimal surface equation over a square and taking boundary 
values plus infinity on two opposite sides and minus infinity on the other two opposite sides. See also Karcher's beautiful 
paper \cite[Section 2.6.1]{Karcher 1988}.
\end{remark}

\begin{remark}[\textbf{Lagrangian Scherk graph in ${\mathbb{R}}^{4}$,   \cite[Example 3]{Lee 2013}}]
 It would be interesting to develop the Jenkins-Serrin type construction for minimal surface system. 
We consider  
 \[
\Sigma = \left\{\left(x, y,  \textrm{arsinh}  \left( \tan x \cos y \right),  \textrm{arsinh}  \left( \tan y \cos x \right)  \right) \in {\mathbb{R}}^{4} \, \vert \, (x,y) \in   {\Omega} \, \right\},
\]
where $\Omega$ is the domain of the Scherk's graph (\ref{Scherk orthogonal}) in Example \ref{deformed scherk doubly}. Due to the equality 
\[
\frac{\partial}{\partial y} \left( \textrm{arsinh}  \left( \tan x \cos y \right) \right) =  \frac{\partial}{\partial x} \left( \textrm{arsinh}  \left( \tan y \cos x \right) \right), 
\]
we find that the minimal surface $\Sigma$ is a gradient graph. Taking the function $h(x,y)$ with
\[
 \left(h_{x}, h_{y} \right) =\left(\textrm{arsinh}  \left( \tan x \cos y \right),   \textrm{arsinh}  \left( \tan y \cos x \right) \right), 
\]
one deduce the Monge-Amp\'{e}re equation 
\[
h_{xx}h_{yy}-{h_{xy}}^{2}=1,
\]
a particular case of \textbf{special Lagrangian equation} with the phase (\cite{Fu 1998} and \cite[Section 1]{Warren Yuan 2009}). Since $\Sigma$ is special Lagrangian 
in ${\mathbb{C}}^{2}$, (as in \cite[Section 8.1.1]{Joyce 2007}), it is a holomorphic curve with respect to other orthogonal 
complex structure on ${\mathbb{R}}^{4}$. Taking the 
complexification 
\[
\left({\zeta}_{1}, {\zeta}_{2}\right)=\left(x+ih_{y}, y -i h_{x} \right),
\]
 we find that, $h_{xx}h_{yy}-{h_{xy}}^{2}=1$ implies that the complex curve $\Sigma$ is holomorphic:
 \[
 d{\zeta}_{1} \wedge d{\zeta}_{2} = - i \left(   h_{xx} h_{yy} -{h_{xy}}^{2} - 1  \right) dx \wedge dy = 0.
 \]
\end{remark}

\begin{example}[\textbf{Doubly periodic minimal graphs in ${\mathbb{R}}^{4}$ derived from sheared Scherk's doubly periodic graph in ${\mathbb{R}}^{3}$}] \label{deformed scherk doubly 2} 
 More generally, Scherk \cite[p. 187]{Scherk 1835} showed that his surface (\ref{Scherk orthogonal}) in 
 Example \ref{deformed scherk doubly} belongs to a one parameter family of doubly periodic minimal graphs.
Following \cite[p. 70]{Nitsche 1988}, for an angle constant $2\alpha \in \left(0, \pi \right)$ and a dilation constant $\rho > 0$, we define Scherk's doubly periodic minimal
 graph ${\Sigma}_{\rho}^{2\alpha}$ by   
\begin{equation} \label{Scherk general angle}
  z = \frac{1}{\rho}  \ln \, \left[ \frac{\cos \left( \frac{\rho}{2}   \left[  \frac{x}{\cos \alpha} -  \frac{y}{\sin \alpha}  \right] \right)}{ \cos \left( \frac{\rho}{2}   \left[  \frac{x}{\cos \alpha} + \frac{y}{\sin \alpha} \right] \right)} \right],
\end{equation} 
 where its domain is an infinite chess board-like net of rhomboids $\Omega=\cup_{i,j\in \mathbb{Z}} {\mathcal{R}}_{ij}$. Here, we define the rhomboid domain ${\mathcal{R}}_{ij}$ with the length $\frac{\pi}{\rho}$ as follows  
 \[
{\mathcal{R}}_{ij}=\left\{ (x,y) \in {\mathbb{R}}^{2} :
 \left\vert    \frac{x}{\cos \alpha} -  \frac{y}{\sin \alpha} - \frac{4i}{\rho} \pi \right\vert<\frac{\pi}{\rho}, \;
 \left\vert    \frac{x}{\cos \alpha} +  \frac{y}{\sin \alpha} - \frac{4j}{\rho} \pi \right\vert<\frac{\pi}{\rho} \right\}.
\]
Taking $\alpha=\frac{\pi}{4}$ and $\rho=2$ in (\ref{Scherk general angle}), the graph ${\Sigma}_{2}^{\frac{\pi}{2}}$ is congruent to the minimal graph
(\ref{Scherk orthogonal}) in Example \ref{deformed scherk doubly}, after the $\frac{\pi}{4}$-rotation. Theorem \ref{Lagrange deformation}
gives the minimal graph in ${\mathbb{R}}^{4}$:
\[
\left\{\left(x, y, \left( \cosh \lambda \right) p(x,y),  \left( \sinh \lambda \right)  q(x,y) \right) \in {\mathbb{R}}^{4} \, \vert \, (x,y) \in   {\Omega} \, \right\},
\]
where the pair $(p(x,y), q(x,y))$ of functions is determined by  
\[
\begin{cases}
p(x,y) = \frac{1}{\rho}  \ln \, \left[ \frac{\cos \left( \frac{\rho}{2}   \left[  \frac{x}{\cos \alpha} -  \frac{y}{\sin \alpha}  \right] \right)}{ \cos \left( \frac{\rho}{2}   \left[  \frac{x}{\cos \alpha} + \frac{y}{\sin \alpha} \right] \right)} \right], \\
q(x,y)=  \frac{1}{\rho}  \arccos\left[  \cos^{2} \alpha \cos \left( \frac{\rho  x}{\cos \alpha} \right) - 
  \sin^{2} \alpha \cos \left( \frac{\rho  y}{\sin \alpha} \right) \right].
\end{cases}
\]
\end{example}

\begin{example}[\textbf{Minimal graphs in ${\mathbb{R}}^{4}$ derived from Scherk's saddle tower in ${\mathbb{R}}^{3}$}] \label{deformed scherk sinlgy} 
Over the domain
$\Omega:=\left\{ \, \left(x, y \right) \in {\mathbb{R}}^{2} \; \vert \;    -1 < \sinh x \, \sinh y <1     \right\}$,
we consider a fundamental piece of the singly periodic multi-valued graph in ${\mathbb{R}}^{3}$:
\begin{equation} \label{Scherk saddle orthogonal}
 \left\{   \left(x, y,  \arcsin \left(  \sinh x \sinh y \right) \right) \in {\mathbb{R}}^{3} \, \vert \, (x,y) \in \Omega \right\},
\end{equation}
which was originally discovered by Scherk \cite[p. 198]{Scherk 1835}. See \cite{Lee 2017 slides} for the picture of this surface. 
Theorem \ref{Lagrange deformation} with a constant $\lambda \in \mathbb{R}-\{0\}$ gives the minimal graph in ${\mathbb{R}}^{4}$:
\[
\left\{\left(x, y,  \left(  \cosh \lambda \right)  \arcsin \left(  \sinh x \sinh y \right),
 \left(  \sinh \lambda \right) \ln \left( \frac{\cosh x}{\cosh y} \right) \right) \in {\mathbb{R}}^{4} \, \vert \, (x,y) \in   {\Omega} \, \right\}.
\] 
\end{example}

\begin{remark}[\textbf{Scherk's saddle tower in ${\mathbb{R}}^{3}$ and its influences}]
Geometrically, Scherk's saddle tower  is a smooth minimal desingularization of
 two perpendicular vertical planes. Though it was discovered more than $180$ years ago, 
 it still plays a fundamental role in the modern theory of desingularizations and gluing construction for surfaces 
 with constant mean curvature and solitons to various curvature flows. Inspired by Scherk's saddle tower, 
 Karcher \cite{Karcher 1988} discovered embedded minimal surfaces in ${\mathbb{R}}^{3}$ derived from Scherk's 
 examples, with explicit Weierstrass datum, and Pacard \cite{Pacard 2002} showed the existence of ($N-2$)-periodic 
 embedded minimal hypersurfaces in ${\mathbb{R}}^{N \geq 4}$ with four hyperplanar ends.
\end{remark}

\begin{example}[\textbf{Minimal graphs  in ${\mathbb{R}}^{4}$ derived from Scherk's generalized tower in ${\mathbb{R}}^{3}$}]   \label{deformed scherk sinlgy 2} 
We take the fundamental piece of the singly periodic multi-valued graph in ${\mathbb{R}}^{3}$:
\begin{equation} \label{Scherk saddle general angle}
 z=p(x,y) = \frac{1}{\rho} \arccos \left[  \cos^{2} \alpha \cosh \left( \frac{\rho  x}{\cos \alpha} \right) - 
  \sin^{2} \alpha \cosh \left( \frac{\rho  y}{\sin \alpha} \right)    \right] 
\end{equation}
following \cite[Section 1]{Pacard 2002} and \cite[p. 74]{Nitsche 1988}.
Theorem \ref{Lagrange deformation} produces the minimal graph in ${\mathbb{R}}^{4}$ 
\[
\left\{\left(x, y, \left( \cosh \lambda \right) p(x,y),  \left( \sinh \lambda \right)  q(x,y) \right) \in {\mathbb{R}}^{4} \, \vert \, (x,y) \in   {\Omega} \, \right\},
\]
where we take the function 
\[
q(x,y)= \frac{1}{\rho}  \ln \, \left[ \frac{\cosh \left( \frac{\rho}{2}   \left[  \frac{x}{\cos \alpha} -  \frac{y}{\sin \alpha}  \right] \right)}{ \cosh \left( \frac{\rho}{2}   \left[  \frac{x}{\cos \alpha} + \frac{y}{\sin \alpha} \right] \right)} \right].
\]
\end{example}

\section{Minimal graphs in ${\mathbb{R}}^{3}$ and special Lagrangian graphs in ${\mathbb{R}}^{6}={\mathbb{C}}^{3}$}

Inspired by Bernstein's Theorem that the entire minimal graphs in ${\mathbb{R}}^{3}$ are planar, 
Fu \cite{Fu 1998}, Jost-Xin \cite{Jost Xin 2002}, Tsui-Wang \cite{Tsui Wang 2002}, Yuan \cite{Yuan 2002} established  
Bernstein type results for entire special Lagrangian graphs in even dimensional Euclidean space. 
Under suitable analytic or geometric assumptions, entire special Lagrangian graphs should be planar.
 
 \begin{theorem}[\textbf{Special Lagrangian equation in ${\mathbb{C}}^{3}$, \cite[Theorem 2. 3 and (4.8)]{Harvey Lawson 1982}}]
Let ${\mathcal{S}}$ be the gradient graph of the $\mathbb{R}$-valued  function $\mathbf{F}(x,y,z)$ in ${\mathbb{R}}^{3}$. 
Then, the $3$-fold ${\mathcal{S}}$ in ${\mathbb{R}}^{6}$ admits an orientation making it into a special Lagrangian graph in ${\mathbb{C}}^{3}$ 
under the complexification 
\[
\left(x_{1}+iy_{1}, x_{2}+iy_{2}, x_{3}+iy_{3} \right)=\left(x_{1}, x_{2}, x_{3}, y_{1}, y_{2}, y_{3} \right)
\]
when the function $\mathbf{F}(x,y,z)$ satisfies the special Lagrangian equation
\begin{equation} \label{SLE3}
 \textrm{det}   
 \begin{bmatrix}
   {\mathbf{F}}_{xx} & {\mathbf{F}}_{xy} & {\mathbf{F}}_{xz} \\
   {\mathbf{F}}_{yx} & {\mathbf{F}}_{yy} & {\mathbf{F}}_{yz} \\
   {\mathbf{F}}_{zx} & {\mathbf{F}}_{zy} & {\mathbf{F}}_{zz}. 
 \end{bmatrix} 
 =
   \textrm{tr}  \begin{bmatrix}
   {\mathbf{F}}_{xx} & {\mathbf{F}}_{xy} & {\mathbf{F}}_{xz} \\
   {\mathbf{F}}_{yx} & {\mathbf{F}}_{yy} & {\mathbf{F}}_{yz} \\
   {\mathbf{F}}_{zx} & {\mathbf{F}}_{zy} & {\mathbf{F}}_{zz}
 \end{bmatrix}. 
\end{equation}
\end{theorem}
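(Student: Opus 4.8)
The plan is to check directly the two conditions that, by the Harvey-Lawson theory cited in the statement, characterize a special Lagrangian submanifold of ${\mathbb{C}}^{3}$: that the graph be Lagrangian for the standard symplectic form $\sum_{k=1}^{3} dx_{k} \wedge dy_{k}$, and that the imaginary part of the holomorphic volume form vanish along the graph. Writing $(x_{1}, x_{2}, x_{3}) = (x, y, z)$ and $\zeta_{j} = x_{j} + i\, {\mathbf{F}}_{x_{j}}$, the gradient graph $\mathcal{S}$ is parametrized by $(x_{1}, x_{2}, x_{3})$, and I would abbreviate the symmetric Hessian by $H$, with entries $H_{jk} = {\mathbf{F}}_{x_{j} x_{k}}$.

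First I would dispose of the Lagrangian condition, which holds for every gradient graph with no hypothesis on ${\mathbf{F}}$. Pulling back $\sum_{k} dx_{k} \wedge dy_{k}$ along $y_{k} = {\mathbf{F}}_{x_{k}}$ produces $\sum_{j,k} H_{jk}\, dx_{j} \wedge dx_{k}$, which vanishes because $H_{jk}$ is symmetric in $(j,k)$ while $dx_{j} \wedge dx_{k}$ is antisymmetric. Thus $\mathcal{S}$ is automatically Lagrangian under the stated complexification.

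Next I would compute the pullback of the holomorphic volume form. From $\zeta_{j} = x_{j} + i\, {\mathbf{F}}_{x_{j}}$ one has $d\zeta_{j} = \sum_{k} \left( \delta_{jk} + i H_{jk} \right) dx_{k}$, so that
\[
d\zeta_{1} \wedge d\zeta_{2} \wedge d\zeta_{3} = \det \left( I + i H \right)\, dx_{1} \wedge dx_{2} \wedge dx_{3}.
\]
Expanding the $3 \times 3$ determinant and separating real and imaginary parts in terms of the elementary symmetric functions of the (real) eigenvalues of $H$ gives
\[
\det \left( I + i H \right) = \left( 1 - \sigma_{2} \right) + i \left( \sigma_{1} - \sigma_{3} \right), \qquad \sigma_{1} = \mathrm{tr}\, H, \quad \sigma_{3} = \det H,
\]
with $\sigma_{2}$ the sum of the principal $2 \times 2$ minors of $H$. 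Hence the imaginary part of the pulled-back volume form vanishes precisely when $\sigma_{1} = \sigma_{3}$, that is, when $\mathrm{tr} \left( \mathrm{Hess}\, {\mathbf{F}} \right) = \det \left( \mathrm{Hess}\, {\mathbf{F}} \right)$, which is exactly the special Lagrangian equation (\ref{SLE3}).

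The step I expect to carry the real content, as opposed to the routine determinant algebra, is upgrading the pointwise identity $\mathrm{Im}\, \det (I + i H) = 0$ to the statement that $\mathcal{S}$, suitably oriented, is a calibrated (hence homologically volume minimizing) special Lagrangian graph. Here I would use that $\left| \det (I + i H) \right| = \prod_{j} \sqrt{1 + {\lambda_{j}}^{2}} \geq 1 > 0$, so $\det (I + i H)$ never vanishes and, under (\ref{SLE3}), is a nonzero real number. Writing $\det (I + i H) = \prod_{j} \left( 1 + i \lambda_{j} \right)$ exhibits its argument as the Lagrangian angle $\Theta = \sum_{j} \arctan \lambda_{j}$; the equation (\ref{SLE3}) forces $\Theta \in \{ -\pi, 0, \pi \}$ pointwise, and since $\Theta$ is continuous it is constant on each connected component of $\mathcal{S}$. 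On such a component I would then choose the orientation so that $\mathrm{Re} \left( d\zeta_{1} \wedge d\zeta_{2} \wedge d\zeta_{3} \right)$ is a positive multiple of the induced volume form; with this choice the real part of the holomorphic volume form calibrates $\mathcal{S}$, and the Harvey-Lawson theorem yields that $\mathcal{S}$ is special Lagrangian, and volume minimizing in its homology class. The only care needed is this orientation bookkeeping together with the constancy of $\Theta$, both of which are guaranteed by (\ref{SLE3}).
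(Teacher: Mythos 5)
Your argument is correct. One thing to be aware of: the paper does not prove this statement at all --- it is quoted verbatim as a known result, with the proof delegated to the citation of Harvey--Lawson (Theorem 2.3 and equation (4.8) of \emph{Calibrated Geometries}). What you have written is, in effect, a self-contained reconstruction of exactly the argument that citation points to: the pullback of $\sum_k dx_k \wedge dy_k$ vanishes on any gradient graph by symmetry of the Hessian, the identity $d\zeta_1 \wedge d\zeta_2 \wedge d\zeta_3 = \det(I+iH)\, dx_1\wedge dx_2 \wedge dx_3$ together with $\det(I+iH) = (1-\sigma_2) + i(\sigma_1 - \sigma_3)$ reduces $\operatorname{Im}\Omega\vert_{\mathcal{S}} = 0$ to $\operatorname{tr}H = \det H$, and the orientation is fixed on each connected component because the Lagrangian angle $\Theta = \sum_j \arctan\lambda_j$ lies in $\left(-\tfrac{3\pi}{2}, \tfrac{3\pi}{2}\right)$, is forced into $\{-\pi,0,\pi\}$, and is locally constant by continuity. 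The details all check out (including the implicit identification $\operatorname{vol}_{\mathcal{S}} = \left|\det(I+iH)\right| dx_1\wedge dx_2\wedge dx_3$, which is what makes $e^{i\Theta}$ the correct unit phase), so your proof is a valid and complete substitute for the external reference; it is not an alternative route so much as the standard one made explicit.
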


\begin{remark}
In \cite[III.4.B. Degenerate projections and harmonic gradients]{Harvey Lawson 1982}, Harvey and Lawson investigated the interesting special case 
when the solution of the equation (\ref{SLE3}) is affine with respect to the coordinate $z$. The function  
$\mathbf{F}(x, y, z)={\mathbf{p}}(x,y)+  z \, {\mathbf{h}}(x,y)$ satisfies the special Lagrangian equation (\ref{SLE3}) if and only if the pair 
$\left({\mathbf{p}}, {\mathbf{q}} \right)$ solves the system
\[
\begin{cases}
 \left( 1 + {{\mathbf{p}}_{y}}^{2} \right)  {\mathbf{p}}_{xx} - 2 {\mathbf{p}}_{x} {\mathbf{p}}_{y}  {\mathbf{p}}_{xy} +  \left( 1 + {{\mathbf{p}}_{x}}^{2} \right)  {\mathbf{p}}_{yy}  =0, \\
 \left( 1 + {{\mathbf{p}}_{y}}^{2} \right)  {\mathbf{h}}_{xx} - 2 {\mathbf{p}}_{x} {\mathbf{p}}_{y}  {\mathbf{h}}_{xy} +  \left( 1 + {{\mathbf{p}}_{x}}^{2} \right)  {\mathbf{h}}_{yy}  =0. \\
  \end{cases}
\]
These two partial differential equations admit interesting geometric interpretations.
The first equation means that the graph of ${\mathbf{p}}(x,y)$ is a minimal surface in ${\mathbb{R}}^{3}$, and the second equation means that 
  ${\mathbf{q}}(x,y)$ is harmonic on the graph of ${\mathbf{p}}(x,y)$. In this case, the patch of the gradient graph of $\mathbf{F}(x, y, z)={\mathbf{p}}(x,y)+  z \, {\mathbf{h}}(x,y)$ is also affine with respect to $z$:
\[
 \Phi  \begin{pmatrix}  x \\ y \\ z   \end{pmatrix}  =
   \begin{bmatrix}  x \\ y \\ z \\  {\mathbf{F}}_x \\  {\mathbf{F}}_y \\ {\mathbf{F}}_{z} \end{bmatrix}  = 
    \begin{bmatrix}  x \\ y \\ z \\  {\mathbf{p}}_x (x,y) + z {\mathbf{q}}_x(x,y) \\  {\mathbf{p}}_y (x,y) +z {\mathbf{q}}_y(x,y) \\  \mathbf{q}(x,y) \end{bmatrix}  = 
        \begin{bmatrix}  x \\ y \\ 0 \\  {\mathbf{p}}_x (x,y)  \\  {\mathbf{p}}_y(x,y)  \\  \mathbf{q}(x,y) \end{bmatrix}  +
  z  \begin{bmatrix}  0 \\ 0 \\ 1 \\    {\mathbf{q}}_x (x,y)\\   {\mathbf{q}}_y(x,y) \\  0 \end{bmatrix},
\]
which geometrically means that the gradient graph is generated by lines $z \in \mathbb{R}  \mapsto \Phi \left(  x, y, z \right)$.
\end{remark}

As the second application of the Lagrange potential 
on minimal graphs in ${\mathbb{R}}^{3}$, we use the above Harvey-Lawson construction to build a one parameter family of special Lagrangian graphs in 
${\mathbb{C}}^{3}$ derived from minimal graphs in ${\mathbb{R}}^{3}$.
 
\begin{theorem}[\textbf{Lagrange potential construction of special Lagrangian graphs in ${\mathbb{R}}^{6}={\mathbb{C}}^{3}$}]  \label{to SL3}
We consider the minimal graph of the function $ \mathbf{p}(x,y) : \Omega \to \mathbb{R}$ on the domain  $\Omega \subset {\mathbb{R}}^{2}:$
\[
 \Sigma = \left\{   \left(x, y, \mathbf{p}(x, y) \right) \in {\mathbb{R}}^{3} \, \vert \, (x,y) \in \Omega \right\}.
\]
Let ${\mathbf{q}}(x,y) : \Omega \to \mathbb{R}$ be the Lagrange potential, which satisfies the Lagrange system
\[  
 \left({\mathbf{q}}_{x}, {\mathbf{q}}_{y} \right)= \left( - \frac{ {\mathbf{p}}_y }{\sqrt{1+{{\mathbf{p}}_{x}}^{2}+{{\mathbf{p}}_{y}}^{2} }}, \frac{ {\mathbf{p}}_x }{\sqrt{1+{{\mathbf{p}}_{x}}^{2}+{{\mathbf{p}}_{y}}^{2} }}    \right).
\]
For any constant $\lambda \in \mathbb{R}$, we associate the three dimensional graph ${\Sigma}_{\lambda}$ in ${\mathbb{R}}^{6}$ 
\[
 \left\{\, \left(x,\, y, \, z,\, {\mathbf{p}}_{x} - \lambda z \frac{ {\mathbf{p}}_{y} }{\sqrt{1+{{\mathbf{p}}_{x}}^{2}+{{\mathbf{p}}_{y}}^{2}}}, \,
 {\mathbf{p}}_{y} +\lambda z  \frac{ {\mathbf{p}}_{x} }{\sqrt{1+{{\mathbf{p}}_{x}}^{2}+{{\mathbf{p}}_{y}}^{2}}}, \, \lambda {\mathbf{q}}\, \right) \in {\mathbb{R}}^{6} \; \vert \; (x,y) \in \Omega, \, z \in \mathbb{R} \right\}.
\]
Then, the $3$-fold ${\Sigma}_{\lambda}$ becomes a special Lagrangian graph in ${\mathbb{C}}^{3}$.
\end{theorem}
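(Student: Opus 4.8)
The plan is to exhibit $\Sigma_\lambda$ as the gradient graph of a potential that is affine in the fibre variable $z$, and then to invoke the Harvey--Lawson criterion recorded in the remark following equation (\ref{SLE3}). Concretely, set
\[
\mathbf{F}(x,y,z) = \mathbf{p}(x,y) + \lambda z\, \mathbf{q}(x,y),
\]
so that $\mathbf{F}$ has the degenerate form $\mathbf{p} + z\mathbf{h}$ with $\mathbf{h} = \lambda\mathbf{q}$. Writing $W = \sqrt{1 + \mathbf{p}_x^2 + \mathbf{p}_y^2}$ and using the Lagrange system $(\mathbf{q}_x, \mathbf{q}_y) = (-\mathbf{p}_y/W,\, \mathbf{p}_x/W)$, a direct differentiation gives
\[
\mathbf{F}_x = \mathbf{p}_x - \lambda z\frac{\mathbf{p}_y}{W}, \qquad \mathbf{F}_y = \mathbf{p}_y + \lambda z\frac{\mathbf{p}_x}{W}, \qquad \mathbf{F}_z = \lambda\mathbf{q},
\]
so that the point $(x,y,z,\mathbf{F}_x,\mathbf{F}_y,\mathbf{F}_z)$ is exactly the generic point of $\Sigma_\lambda$. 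Hence $\Sigma_\lambda$ is the gradient graph of $\mathbf{F}$ and is automatically Lagrangian in $\mathbb{C}^3$.

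It remains to check that $\mathbf{F}$ solves the special Lagrangian equation (\ref{SLE3}). By the Harvey--Lawson analysis of $z$-affine potentials (the remark following (\ref{SLE3})), the gradient graph of $\mathbf{p} + z\mathbf{h}$ is special Lagrangian precisely when the graph of $\mathbf{p}$ is a minimal surface in $\mathbb{R}^3$ and $\mathbf{h}$ is harmonic on that graph, i.e.
\[
(1 + \mathbf{p}_y^2)\mathbf{h}_{xx} - 2\mathbf{p}_x\mathbf{p}_y\mathbf{h}_{xy} + (1 + \mathbf{p}_x^2)\mathbf{h}_{yy} = 0.
\]
The first condition holds by hypothesis. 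For the second, since $\lambda$ is constant, $\mathbf{h} = \lambda\mathbf{q}$ is harmonic on the graph if and only if $\mathbf{q}$ is; and this is the content of item \textbf{(b)} of Remark \ref{Geometric meaning of Lagrange potential}, where $\mathbf{p} + i\mathbf{q}$ is shown to be holomorphic on the minimal graph, so that its imaginary part $\mathbf{q}$ is harmonic there. By Remark \ref{two equiv operators}, this harmonicity is the same as $\mathcal{L}_\Sigma\mathbf{q} = 0$, which is the displayed equation with $\mathbf{h}$ replaced by $\mathbf{q}$. With both conditions verified, the Harvey--Lawson criterion furnishes an orientation making $\Sigma_\lambda$ special Lagrangian in $\mathbb{C}^3$.

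The routine part is the coordinate identification in the first step; the conceptual heart of the argument is that the Lagrange potential $\mathbf{q}$, built in Lemma \ref{Lagrange potential}, is automatically harmonic on the minimal graph of $\mathbf{p}$, which is exactly the input the degenerate Harvey--Lawson ansatz requires. The step I expect to demand the most care is the reduction to the special Lagrangian equation (\ref{SLE3}) itself: one expands $\det(\mathrm{Hess}\,\mathbf{F}) - \mathrm{tr}(\mathrm{Hess}\,\mathbf{F})$ as a polynomial in $z$ for the bordered Hessian of the $z$-affine potential and shows, using the minimal surface equation for $\mathbf{p}$, the harmonicity of $\mathbf{q}$, and the Lagrange relations $\mathbf{q}_x = -\mathbf{p}_y/W$, $\mathbf{q}_y = \mathbf{p}_x/W$, that the defining relation is satisfied identically; keeping track of the correct phase and orientation is the only delicate bookkeeping.
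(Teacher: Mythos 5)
Your proposal is correct and follows essentially the same route as the paper: both identify $\Sigma_\lambda$ as the gradient graph of the $z$-affine potential $\mathbf{F}=\mathbf{p}+\lambda z\,\mathbf{q}$, obtain the harmonicity of $\mathbf{q}$ on the minimal graph from the holomorphicity of $\mathbf{p}+i\mathbf{q}$ (Remark \ref{Geometric meaning of Lagrange potential} together with Remark \ref{two equiv operators}), and then invoke the Harvey--Lawson criterion for degenerate ($z$-affine) special Lagrangian potentials. Your explicit computation of $(\mathbf{F}_x,\mathbf{F}_y,\mathbf{F}_z)$ is a welcome detail the paper leaves implicit, but the argument is otherwise identical.
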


\begin{proof} 
By the item \textbf{(c)} in Remark \ref{Geometric meaning of Lagrange potential}, the function ${\mathbf{p}} + i {\mathbf{q}}$ is holomorphic 
on the minimal graph $\Sigma$. Since ${\mathbf{p}}$ and ${\mathbf{q}}$ are harmonic on the minimal graph $\Sigma$, by Remark \ref{two equiv operators}, we obtain
\begin{equation}  \label{HL system}
\begin{cases}
 \left( 1 + {{\mathbf{p}}_{y}}^{2} \right)  {\mathbf{p}}_{xx} - 2 {\mathbf{p}}_{x} {\mathbf{p}}_{y}  {\mathbf{p}}_{xy} +  \left( 1 + {{\mathbf{p}}_{x}}^{2} \right)  {\mathbf{p}}_{yy}  =0, \\
  \left( 1 + {{\mathbf{p}}_{y}}^{2} \right)  {\left( \lambda \mathbf{q} \right)}_{xx} - 2 {\mathbf{p}}_{x} {\mathbf{p}}_{y}  {\left( \lambda \mathbf{q} \right)}_{xy} +  \left( 1 + {{\mathbf{p}}_{x}}^{2} \right)   {\left( \lambda \mathbf{q} \right)}_{yy}  =0.
 \end{cases}
\end{equation}  
However, by \cite[Theorem 4.9]{Harvey Lawson 1982}, the system (\ref{HL system}) guarantees that the gradient graph of the function  $\mathbf{F}(x, y, z)={\mathbf{p}}(x,y)+ \lambda z \, {\mathbf{q}}(x,y)$ becomes a special Lagrangian $3$-fold in ${\mathbb{C}}^{3}$. 
\end{proof}

\begin{remark} 
The construction in Theorem \ref{to SL3} with $\lambda = 0$ recovers the product of the line $\mathbb{R}$ and the special Lagrangian $2$-fold given by the gradient graph of the function ${\mathbf{p}}(x,y)$. 
\end{remark}

\begin{example}[\textbf{Doubly periodic special Lagrangian graph in ${\mathbb{C}}^{3}$}]  \label{two periodic SL3}
We apply Theorem \ref{to SL3} to the fundamental piece of Scherk's doubly periodic graph on the domain $\Omega$ in Example \ref{deformed scherk doubly} to have the one parameter family of special Lagrangian graph ${\Sigma}_{\lambda}$ in ${\mathbb{C}}^{3}$:
\[
{\Sigma}_{\lambda} = \left\{\, \left(x,\, y, \, z,\,  \mathbf{A} \left(x, y, z\right), \mathbf{B}\left(x, y, z\right), \mathbf{C} \left(x, y, z\right) \right) \in {\mathbb{R}}^{6} \; \vert \; (x,y) \in \Omega, \, z \in \mathbb{R} \right\},
\]
where $\left(\mathbf{A}, \mathbf{B}, \mathbf{C} \right) = \left(  -\frac{\sin x}{\cos x} + \lambda z \frac{\sin x \cos y}{\sqrt{1-{\sin}^{2} x \; {\sin}^{2}y \,}\,} , \, 
 \frac{\sin y}{\cos y} + \lambda z \frac{\cos x \sin y}{\sqrt{1-{\sin}^{2} x \; {\sin}^{2}y\, }\,}, \, \lambda \arcsin\left( \sin x \sin y \right)  \right)$. 
\end{example}
 
\bigskip 
\bigskip  

 \textbf{Acknowledgements. }
\quad The author met Finn-Osserman's elegant proof \cite{Finn Osserman 1984} of Bernstein's Theorem in ${\mathbb{R}}^{3}$, which uses
Scherk's doubly periodic minimal graph (Remark \ref{Finn Osserman proof}), during a lecture presented by Antonio 
Ros at the international conference held in Seville, Spain. 
Part of this work was carried out while he was visiting Jaigyoung Choe at Korea Institute for Advanced Study, Seoul, Korea. 
He would like to thank KIAS for the hospitality. He would like to warmly thank J. C. (again) who presented him 
with a copy of ``\emph{A Survey of Minimal Surfaces}'' when he was an $\epsilon$.

 \bigskip 
 \bigskip

\bigskip
\bigskip
\bigskip


\begin{thebibliography}{00} 

\bibitem{Abikoff Corillon Gilman Kra Weinstein 1995}
W. Abikoff, C. Corillon, J. Gilman, I. Kra,  T. Weinstein, 
\emph{Remembering Lipman Bers}, \textbf{Notices Amer. Math. Soc}. 42 (1995), no. 1, 8--25.

\bibitem{Becker Kahn 2014}
S. T. Becker-Kahn. \emph{Transverse singularities of minimal two-valued graphs in arbitrary codimension}, 
arXiv preprint arXiv:1401.7660 (2014), to appear in  \textbf{J. Differential Geom.}
 
 \bibitem{Bers 1951}
 L. Bers, \emph{Isolated singularities of minimal surfaces}, 
 \textbf{Ann. of Math.} (2) 53 (1951), 364--386. 
 
  \bibitem{Beyerstedt 1991}
 R. Beyerstedt, \emph{Removable singularities of solutions to elliptic Monge-Ampère equations}, 
 \textbf{Math. Z.} 208 (1991), 363--373.

\bibitem{Calabi 1970} 
E. Calabi, \emph{Examples of Bernstein problems for some non-linear equations}, Proc. Sympos.
Pure Math. 15 (1970), Amer. Math. Soc., Providence, RI, 223--230.
 
\bibitem{Catalan 1855}
E. Catalan, \emph{M\'{e}moire sur les surfaces dont les rayons de courbure, en chaque point, sont \'{e}gaux
et de signer contraires}, \textbf{C. R. Acad. Sci. Paris}, 41 (1855), 1019--1023. Available at \url{http://gallica.bnf.fr/ark:/12148/bpt6k2998h?rk=21459;2}
 
\bibitem{Chen Goes 1983}
C.-C. Chen, C.-C. G\'{o}es, \emph{Degenerate minimal surfaces in ${\mathbb{R}}^{4}$}, 
\textbf{Bol. Soc. Brasil. Mat.} 14 (1983), no. 1, 1--16. 

\bibitem{Chern Osserman 1967} 
S.-S. Chern, R. Osserman, \emph{Complete minimal surfaces in Euclidean $n$-space},
\textbf{J. Anal. Math.} 19 (1967), 15--34.

\bibitem{Dierkes Hildebrandt Sauvigny 2010}
U. Dierkes, S. Hildebrandt, F. Sauvigny, \emph{Minimal Surfaces}, Grundlehren der mathematischen Wissenschaften, vol.
339. Springer, Berlin (2010).

\bibitem{Eells 1979}
J. Eells, \emph{Minimal graphs}, 
\textbf{Manuscripta Math}, 28 (1979), no. 1-3, 101--108. 

 \bibitem{Finn 1953}
R. Finn,  \emph{Isolated singularities of solutions of non-linear partial differential equations}, 
 \textbf{Trans. Amer. Math. Soc.} 75 (1953), 385--404.
 
 \bibitem{Finn Osserman 1984}
R. Finn, R. Osserman, \emph{On the Gauss curvature of non-parametric minimal surfaces}, 
\textbf{J. Anal. Math.} 12 (1964), 351--364. 

\bibitem{Fraser Schoen 2016}
A. Fraser, R. Schoen, \emph{Sharp eigenvalue bounds and minimal surfaces in the ball}, 
\textbf{Invent. Math.} 203 (2016), no. 3, 823--890.

\bibitem{Fu 1998}
L. Fu, \emph{An analogue of Bernstein's theorem}, \textbf{Houston J. Math.} 24 (1998), 415--419.

\bibitem{Harvey Lawson 1982}
R. Harvey, H. Lawson, \emph{Calibrated Geometries}, \textbf{Acta Math.} 148 (1982), 47--157.

 \bibitem{Hoffman Osserman 1980}
 D. A. Hoffman, R. Osserman, \emph{The geometry of the generalized Gauss map}, \textbf{Mem. Amer. Math. Soc.} 28 (1980), no. 236.

  \bibitem{Jenkins Serrin 1966} 
 H. Jenkins, J. Serrin,  \emph{Variational problems of minimal surface type. II. 
Boundary value problems for the minimal surface equation}, \textbf{Arch. Rational 
Mech. Anal.} 21 (1966), 321--342.

 \bibitem{Jost Xin 2002}
 J. Jost, Y. L. Xin, 
 \emph{A Bernstein theorem for special Lagrangian graphs}, 
 \textbf{Calc. Var. Partial Differential Equations}, 15 (2002), no. 3, 299--312

  \bibitem{Joyce 2007} 
 D. Joyce, Riemannian holonomy groups and calibrated geometry, 
Oxford Graduate Texts in Mathematics, 12. Oxford University Press, Oxford, 2007. 

  \bibitem{Karcher 1988} 
H. Karcher, \emph{Embedded minimal surfaces derived from Scherk's examples}, \textbf{Manuscripta Math.}
 62 (1988), no. 1, 83--114.

   \bibitem{Landsberg 1992}
J. M. Landsberg, \emph{Minimal submanifolds defined by first-order systems of PDE}, 
\textbf{J. Differential Geom.} 36 (1992), 369--415.

 \bibitem{Lagrange 1760} 
 J. L. Lagrange. \emph{Essai d'une nouvelle methode pour determiner les maxima et les minima des formules integrales indefinies},  
 \textbf{Miscellanea Taurinensia} 2, 325(1):173--199, 1760. Available at
  \url{http://sites.mathdoc.fr/cgi-bin/oetoc?id=OE_LAGRANGE__1}

 \bibitem{Lawson 1971} H. B. Lawson, \textit{Some intrinsic characterizations of minimal surfaces}, 
  \textbf{J. Analyse Math.} 24 (1971), 151--161. 

 \bibitem{Lawson Osserman 1977} 
 H. B. Lawson, R. Osserman,  \emph{Non-existence, non-uniqueness and irregularity of solutions to the minimal surface system}, 
 \textbf{Acta Math.} 139 (1977), no. 1-2, 1--17.
 
 \bibitem{Lee 2013} 
 H. Lee, \emph{Minimal surface systems, maximal surface systems and special Lagrangian equations}, 
  \textbf{Trans. Amer. Math. Soc.} 365 (2013), 3775--3797. 
  
 \bibitem{Lee 2017} 
 H. Lee, \emph{Minimal surfaces in ${\mathbb{R}}^{4}$ foliated by conic sections and parabolic rotations of holomorphic null 
 curves in ${\mathbb{C}}^{4}$},  arXiv preprint arXiv:1702.06047 (2017).

\bibitem{Lee 2017 slides}
H. Lee, Mysteries of Minimal Surfaces, \url{https://www.youtube.com/watch?v=Lz-sPgZNjmE}

  \bibitem{Micallef 1984} 
M. J. Micallef, \emph{Stable minimal surfaces in Euclidean space},  \textbf{J. Differential Geom.} 19 (1984), no. 1, 57--84. 

  \bibitem{Micallef White1984} 
M. J. Micallef, B. White, \emph{The Structure of Branch Points in Minimal Surfaces and in Pseudoholomorphic Curves},
\textbf{Ann. of Math.} 141 (1985), no. 1, 35--85.
 
  \bibitem{Nitsche 1988} 
 J. C. C. Nitsche, Lectures on Minimal Surfaces, Volume 1, Cambridge University Press, Cambridge, 1988.

\bibitem{Osserman 1968} 
 R. Osserman, Some properties of solutions to the minimal surface system for arbitrary codimension. 1970 Global Analysis (Proc. Sympos. Pure Math., Vol. XV, Berkeley, Calif., 1968) pp. 283--291 Amer. Math. Soc., Providence, R.I.
 
\bibitem{Osserman 1969} 
 R. Osserman, \emph{Minimal varieties},  \textbf{Bull. Amer. Math. Soc.} 75 (1969), no. 6, 1092--1120.
 
 \bibitem{Osserman 1973}
 R. Osserman, \emph{On Bers' theorem on isolated singularities}, 
\textbf{Indiana Univ. Math. J.} 23 (1973), 337--342. 

\bibitem{Osserman 1986}
 R. Osserman, \emph{A survey of minimal surfaces}, Second edition. Dover Publications, Inc., New York, 1986.
 
\bibitem{Pacard 2002}
F. Pacard, \emph{Higher dimensional Scherk's hypersurfaces}, \textbf{J. Math. Pures Appl.} 9 (2002), no. 3, 241--258.
 
 \bibitem{Rado 1933}
T. Rad\'{o}, On the problem of Plateau, Ergebnisse der Math. Band 2. Springer, Berlin (1933)
 
\bibitem{Scherk 1835}
H. F. Scherk, \emph{Bemerkungen \"{u}ber die kleinste Fl\"{a}che innerhalb gegebener Grenzen}, 
\textbf{J. Reine Angew. Math.} 13 (1835), 185--208. Available at \url{https://doi.org/10.1515/crll.1835.13.185}

\bibitem{Traizet 2014}
M. Traizet, \emph{Classification of the solutions to an overdetermined elliptic problem in the plane}, 
\textbf{Geom. Funct. Anal.} 24 (2014), 690--720.

  \bibitem{Tsui Wang 2002}
 M.-P. Tsui, M.-T. Wang,  \emph{A Bernstein type result for special Lagrangian submanifolds}, 
 \textbf{Math. Res. Lett.} 9 (2002), no. 4, 529--535.

\bibitem{Warren Yuan 2009}
M. Warren, Y. Yuan, \emph{Explicit gradient estimates for minimal Lagrangian surfaces of 
dimension two}, \textbf{Math. Z.} 262 (2009), no. 4, 867--879.

 \bibitem{Yuan 2002}
 Y. Yuan, \emph{Bernstein problem for special Lagrangian equations}, \textbf{Invent. Math.} 150 (2002), no. 1, 117--125. 
 
\end{thebibliography}
\end{document}